\newtheorem{Theorem}{Theorem}[section]
\newtheorem{Example}[Theorem]{Example}
\newtheorem{Proposition}[Theorem]{Proposition}
\newtheorem{Lemma}[Theorem]{Lemma}
\newtheorem{Corollary}[Theorem]{Corollary}
\newtheorem{Remark}[Theorem]{Remark}
\newcommand{\pd}{\partial}
\newcommand{\bC}{{\mathbb C}}
\newcommand{\bN}{{\mathbb N}}
\newcommand{\bP}{{\mathbb P}}
\newcommand{\bQ}{{\mathbb Q}}
\newcommand{\bZ}{{\mathbb Z}}
\newcommand{\cD}{{\mathcal D}}
\newcommand{\cP}{{\mathcal P}}
\newcommand{\half}{\frac{1}{2}}
\newcommand{\be}{\begin{equation}}
\newcommand{\ee}{\end{equation}}
\newcommand{\bea}{\begin{eqnarray}}
\newcommand{\ben}{\begin{eqnarray*}}
\newcommand{\een}{\end{eqnarray*}}
\newcommand{\eea}{\end{eqnarray}}
\DeclareMathOperator{\Aut}{Aut}
\DeclareMathOperator{\Res}{Res}
\DeclareMathOperator{\Hom}{Hom}
\definecolor{yellow}{rgb}{1,1,0}
\definecolor{orange}{rgb}{1,.7,0}
\definecolor{red}{rgb}{1,0,0}
\definecolor{green}{rgb}{0,1,1}
\definecolor{white}{rgb}{1,1,1}
\definecolor{A}{rgb}{.75,1,.75}
\begin{document}

\newtheorem{myDef}{Definition}
\newtheorem{thm}{Theorem}
\newtheorem{eqn}{equation}

\title[$\mathbb N$-coefficient binomial polynomiality of Hurwitz numbers]
{On $\mathbb N$-Coefficient Binomial Polynomiality of\\Hurwitz Numbers and Generalized Dessin Counting}

\author{Zhiyuan Wang}
\address{Zhiyuan Wang, School of Mathematics and Statistics,
	Huazhong University of Science and Technology,
	Wuhan, China}
\email{wangzy23@hust.edu.cn}

\author{Chenglang Yang}
\address{Chenglang Yang, Institute for Math and AI, Wuhan University, Wuhan, China}
\address{Hua Loo-Keng Center for Mathematical Sciences,
	Academy of Mathematics and Systems Science,
	Chinese Academy of Sciences,
	Beijing, China}
\email{yangcl@pku.edu.cn}

\begin{abstract}
In this paper,
we study a certain type of Hurwitz numbers which count branched covers over the Riemann sphere admitting several branch points with fixed ramification types,
one branch point with a fixed number of preimages,
and one branch point with an arbitrary ramification type.
We prove that the dependence of this kind of Hurwitz numbers on parts of the ramification type over the last point is a polynomial.
Moreover,
when expanding this polynomial in terms of products of binomial coefficients,
we show that the coefficients are always non-negative integers via a pure combinatorial method.
Our result generalizes the polynomiality in several models,
including the one-part double Hurwitz numbers studied by Goulden-Jackson-Vakil,
the one-part double Hurwitz numbers with completed cycles studied by Shadrin-Spitz-Zvonkine,
and the generalized dessin counting.
\end{abstract} 

\maketitle


\section{Introduction}

The enumeration of branched coverings of Riemann surfaces
is one of the most important subjects in the study of moduli spaces of Riemann surfaces,
enumerative geometry and mathematical physics.
Fixing the number of branch points
and the ramification type over each branch point,
the weighted counting of isomorphism classes of such branched coverings is called the
Hurwitz number \cite{hur}.
These Hurwitz numbers are known to be closely related to many other theories
in mathematics and theoretical physics,
such as representation theory, integrable systems,
geometry of moduli spaces,
and topological string theory,
see e.g.
\cite{bcd,bm,bdks,cm,cd,cdo,di,dm,dyz,dmss,dkoss,elsv1,elsv2,gj,gjv,gjv2,gkl,gkls,jwy,kl,ok,op,op2,pa}
and references therein.

The polynomiality is one of the
most interesting features of the Hurwitz numbers.
In literatures,
the polynomial structures of various types of Hurwitz numbers are widely studied
using different methods
which are known to have deep connections to
the geometry of moduli spaces and Gromov-Witten theory.
The polynomiality of the single Hurwitz numbers is a consequence of the famous ELSV formula
(see \cite{elsv1,elsv2}, and see \cite{dkoss} for a new proof)
which relates the single Hurwitz numbers to
the intersection theory on the moduli spaces of curves.
The polynomiality of the double Hurwitz numbers was proved by Goulden-Jackson-Vakil \cite{gjv2},
which motivates some conjectural relations between
one-part double Hurwitz numbers and the intersection theory
on some suitable compactification of the Picard variety.
Another proof of the polynomiality of double Hurwitz numbers
was given by Johnson \cite{jo},
and his method is to use the infinite-wedge representation
of the double Hurwitz numbers proposed by Okounkov \cite{ok}
which reveals some connections between Hurwitz numbers and integrable hierarchies.
In \cite{ssz},
Shadrin-Spitz-Zvonkine proved the polynomiality
of the double Hurwitz numbers with completed cycles using the infinite-wedge representation
in this case.
The polynomiality of the monotone Hurwitz numbers and mixed double Hurwitz numbers
are proved by Goulden, Guay-Paquet, and Novak
in \cite{ggn1} and \cite{ggn2} respectively,
and see also \cite{hkl} for a further generalization.
Similar structures also appear in the case of spin Hurwitz numbers \cite{eop},
which are known to be a generalization of the ordinary Hurwitz numbers to type $B$.
In \cite{gkl},
Giacchetto-Kramer-Lewa\'nski proved the polynomiality of the spin double Hurwitz numbers
using the BKP integrability.

Now in this work,
we study the polynomiality of Hurwitz numbers of a certain type
using a combinatorial method.
We represent these Hurwitz numbers as a linear combination of
certain products of binomial coefficients with non-negative integral coefficients.
Such type of representations is inspired by the work \cite{st} of Stanley,
and see also \cite{hx, ol} for some generalizations of his work.
Fix a positive integer $r$ and partitions of integers $\lambda^1,\lambda^2,\cdots,\lambda^r$.
Let $\mu = (\mu_1,\cdots,\mu_l)$ be a partition with a fixed length $l$ and
satisfying $|\mu|\geq |\lambda^i|$ for every $i$,
and let $\tilde\lambda^i = (\lambda^i, 1^{|\mu|-|\lambda^i|})$ be the partition
obtained by adding some $1$ to $\lambda^i$ such that $|\tilde\lambda^i| = |\mu|$.
Fix a positive integer $k$,
and denote:
\be
H_{\lambda^1,\cdots,\lambda^r;k}^\circ (\mu)
= \sum_{\substack{ \sigma\in \cP_{|\mu|}, \text{ s.t. } l(\sigma) = k }} H_{g}^\circ
(\tilde\lambda^1,\cdots,\tilde\lambda^r,\sigma,\mu),
\ee
where $H_{g}^\circ (\tilde\lambda^1,\cdots,\tilde\lambda^r,\sigma,\mu)$ is the Hurwitz number
counting branched coverings from a connected Riemann surface of genus $g$ to the Riemann sphere
with $(r+2)$ given branch points,
such that the ramification types are given by the partitions $\tilde\lambda^1,\cdots,\tilde\lambda^r,\sigma,\mu$.
Here $\cP_{|\mu|}$ is the set of all partitions of the integer $|\mu|$,
and $l(\sigma)$ is the length of the partition $\sigma$.
Our main theorem is:
\begin{Theorem}
\label{thm-intro-main}
Fix three positive integers $r,k,l>0$ and $r$ partitions of integers $\lambda^1,\cdots,\lambda^r$.
Let $\mu = (\mu_1,\cdots,\mu_l)$ be a partition of length $l$ with $|\mu|\geq |\lambda^i|$ for every $i$,
then
\begin{equation*}
z_\mu \cdot H_{\lambda^1,\cdots,\lambda^r;k}^\circ (\mu)
\end{equation*}
is a linear combination of
\begin{equation*}
\binom{\mu_1}{d_1} \binom{\mu_2}{d_2} \cdots \binom{\mu_l}{d_l}
\end{equation*}
with non-negative integer coefficients,
where $d_j\geq 1$ for every $1\leq j\leq l$, and
\begin{equation*}
\sum_{j=1}^l d_j \leq \sum_{i=1}^r 2 \big( |\lambda^i| - l(\lambda^i)\big).
\end{equation*}
The number $z_\mu$ for a partition $\mu$ is defined by:
$z_\mu = \prod_{i\geq 1} m_i(\mu)! \cdot i^{m_i(\mu)}$
where $m_i(\mu)$ is the number of $i$ appearing in $\mu$.
\end{Theorem}

The non-negative integer coefficients of the above linear combination have a combinatorial interpretation
which arises naturally from the group-theoretic definition of  Hurwitz numbers,
see \S \ref{thm-binompol-H} for details.
As a straightforward consequence,
we see that $z_\mu \cdot H_{\lambda^1,\cdots,\lambda^r;k}^\circ (\mu)$
is a polynomial with rational coefficients in $\mu_1,\cdots,\mu_l$.
It is worth mentioning that the same result also holds
for the disconnected Hurwitz numbers $H_{\lambda^1,\cdots,\lambda^r;k}^\bullet (\mu)$
by the same argument,
see Remark \ref{rmk-disconnpol}.

As applications,
we apply the above theorem to two models
and obtain the polynomial structures of these two cases.
The first model is the one-part (and also $m$-part in general) double Hurwitz numbers with completed cycles.
The ordinary double Hurwitz number labeled by two partitions $\mu^\pm$
enumerates the branched coverings of the Riemann sphere which have ramification type $\mu^\pm$
over two given points
and simple ramifications over other branch points.
The double Hurwitz numbers with completed cycles
are defined similarly as the ordinary double Hurwitz numbers such that
the simple branches are all replaced by the completed cycles \cite{op}.
See \cite{gjv2} and \cite{ssz} for the polynomiality in these two cases.
As a corollary of the main theorem,
we have
(see \S \ref{sec-onepart} for details and a review of notations):
\begin{Proposition}
Fix a positive integer $r$.
Let $\mu = (\mu_1,\cdots,\mu_l)$ be a partition of fixed length $l$
with $|\mu| \geq r+1$,
and denote by $H_{g;\mu,\nu}^{(r)}$ the (connected or disconnected) double Hurwitz number
with completed $(r+1)$-cycles labeled by two partitions $\mu,\nu$.
Then the following $m$-part double Hurwitz numbers with completed cycles:
\begin{equation*}
H_{g;m;\mu}^{(r)} = \sum_{l(\nu) = m} H_{g;\mu,\nu}^{(r)}.
\end{equation*}
is an element in $\bQ [\mu_1,\cdots,\mu_l]$.
\end{Proposition}

The special case $m=1$, $r=1$ gives the polynomiality of the ordinary one-part double Hurwitz numbers firstly proved by Goulden-Jackson-Vakil \cite{gjv2},
and in this case,
we also obtain the property that when expanding the corresponding polynomial as a linear combination of products of binomial coefficients,
the coefficient will be nonnegative integers.
The special case $m=1$ of this proposition
gives the polynomiality of the one-part double Hurwitz numbers with completed cycles proved by Shadrin-Spitz-Zvonkine \cite{ssz}.

Another application of the main theorem is the polynomiality
of the generalized dessin counting.
Among various types of branched coverings of Riemann surfaces,
the branched coverings of the Riemann sphere over three points $0,1,\infty \in \bC\bP^1$
are of particular interest
since it leads to the notion of Grothendieck's dessin d'enfants \cite{gr, be},
which are bi-colored ribbon graphs on surfaces.
For an introduction to Grothendieck's dessin d'enfants, ribbon graphs,
and some related topics,
see the book \cite{lz}.
The enumeration of dessin d'enfants has been well-studied by mathematical physicists,
see e.g. \cite{ammn, ac, dmss, kz, kls, yz, zhou2, zhou3, zog}.
Let the monodromy type over $\infty\in \bC\bP^1$ and
the numbers of the preimages of $0$ and $1$ be fixed,
then the generating series of the weighted counting of such branched coverings over $\bC\bP^1$
is called the dessin partition function by Kazarian and Zograf in \cite{kz}.
In their paper,
they have derived the Virasoro constraints and Eynard-Orantin topological recursion
for the dessin partition function using the combinatorial properties of ribbon graphs.
As a consequence of the Virasoro constraints,
they obtain a cut-and-join representation for the dessin partition function
which naturally leads to the KP integrability (see also \cite{zog}).
Now in this work we study a generalization of the dessin partition function.
Fix a positive integer $r$,
and consider the branched coverings over $\bC\bP^1$ with $r+1$ branch points
$q_1,\cdots,q_r,q_{r+1} \in \bC\bP^1$,
such that the preimage $f^{-1}(q_i)$ consists of $k_i$ points for $1\leq i\leq r$,
and the ramification type over $q_{r+1}$ is described by
a partition of integer $\mu = (\mu_1,\cdots,\mu_l)$.
Denote by
\begin{equation*}
N_{k_1,\cdots,k_r}^\bullet (\mu) = \sum_{f} \frac{1}{|\Aut(f)|}
\end{equation*}
the weighted counting of equivalence classes of such branched coverings
from a (not necessarily connected) Riemann surface to $\bC\bP^1$.
The case $r=2$ recovers the dessin counting studied by Kazarian-Zograf \cite{kz}.
As a corollary of the main theorem,
we have (see \S \ref{sec-gdessin}):
\begin{Theorem}
Fix some positive integers $l>0$, $r>1$ and $k_1,k_2,\cdots,k_r>0$.
Then for a partition of integer $\mu = (\mu_1, \mu_2, \cdots, \mu_l)$ (with $|\mu|>k_i$ for every $i$),
the number
\begin{equation*}
z_\mu \cdot N_{|\mu|-k_1,\cdots,|\mu|-k_{r-1},k_r}^\circ (\mu)
\end{equation*}
is a linear combination of
\begin{equation*}
\binom{\mu_1}{d_1} \binom{\mu_2}{d_2} \cdots \binom{\mu_l}{d_l}
\end{equation*}
with non-negative integer coefficients,
where $d_j\geq 1$ and
$\sum_{j=1}^l d_j \leq 2\sum_{i=1}^{r-1} k_i$.
\end{Theorem}

In a previous version of this work
we proposed the polynomiality of the generalized dessin counting
as a conjecture,
and later it was pointed out by an anonymous referee that
this can be proved by applying the Eynard-Orantin topological recursion in this case \cite{bdks}.
After that, we found that the combinatorial method we use now can actually prove
the $\bN$-coefficient binomial polynomiality in a more general case
as in Theorem \ref{thm-intro-main}.

We also give some combinatorial properties of the generalized dessin partition function,
which is the following generating series of the generalized dessin counting:
\begin{equation*}
Z_{(r)}(s;v_1,\cdots,v_r;\bm p)
= \sum_{k_1,\cdots,k_r = 1}^\infty \sum_{\mu\in \cP}
\Big(\prod_{i=1}^r v_i^{k_i}\Big) s^{|\mu|} N_{k_1,\cdots,k_r}^\bullet (\mu) \cdot p_\mu,
\end{equation*}
where $s$, $\bm v =(v_1,\cdots,v_r)$ and $\bm p = (p_1,p_2,\cdots)$ are some formal variables,
and $p_\mu = p_{\mu_1} p_{\mu_2}\cdots p_{\mu_l}$
for a partition $\mu=(\mu_1,\cdots,\mu_l)$.
We prove the following (see \S \ref{sec-gdessin} for details):
\begin{Theorem}
There is a cut-and-join type representation
for the generalized dessin partition function $Z_{(r)}(s;v_1,\cdots,v_r;\bm p) $:
\be
\label{eq-cutandjoin-intro}
Z_{(r)}(s;v_1,\cdots,v_r;\bm p)
= \exp\Big( s\cdot \sum_{k=1}^{r+1} a_k(\bm v) P_{-1}^{(k)} \Big) (1).
\ee
Here $a_1(\bm v),\cdots,a_{r+1}(\bm v)$ are the following polynomials in $\bm v = (v_1,\cdots,v_r)$:
\begin{equation*}
a_k (\bm v) =
\frac{(-1)^{k-1}}{k!}e_r(\bm v)
+\frac{1}{k}
\sum_{j=0}^r \Big( \sum_{i=1}^{k-1} \frac{(-1)^{i+k-1} \cdot i^{r-j}}{i!\cdot (k-1-i)!} \Big)  e_j(\bm v),
\end{equation*}
where $e_j(\bm v)$ is the elementary symmetric polynomial of degree $j$ in $v_1,v_2,\cdots,v_r$;
and $P_{-1}^{(k)}$ are the cut-and-join type operators:
\begin{equation*}
P_{-1}^{(k)} = \Res_{z=0} \Big( :z^{k-2}\big( \pd_z + J(z)\big)^{k-1} J(z): \Big)dz,
\end{equation*}
where $J(z) = \sum_{n\in \bZ_+}  n t_n \cdot z^{n-1}
+\sum_{n\in \bZ_+} \frac{\pd}{\pd t_n} \cdot z^{-n-1}$.
\end{Theorem}

In literatures it is known that the above cut-and-join type operators $P_{-1}^{(k)}$
belong to the algebra $W_{1+\infty}$ which gives the symmetry of the KP hierarchy \cite{fkn},
and thus the partition function $Z_{(r)}(s;v_1,\cdots,v_r;\bm p)$
is a tau-functions of the KP hierarchy for every $r$.

When $r=2$,
this recovers the cut-and-join representation
for the dessin partition function derived by Kazarian-Zograf in \cite{kz}.
In that work,
Kazarian and Zograf proved the cut-and-join representation for $Z_{(2)}$
using the combinatorics of ribbon graphs;
and here we prove the above theorem by a different combinatorial method,
namely,
the actions of $W$-type operators on Schur functions derived by Liu and the second-named author \cite{ly},
which enables us to treat $Z_{(r)}$ for general $r\geq 1$ in a unified fashion.
The general $r$ case was also studied in \cite{ammn,ac} using matrix models.
Alexandrov-Mironov-Morozov-Natanzon \cite{ammn}
derived another cut-and-join type representation for this partition function using some infinite Casimir operators.
Here we will need only finitely many $W$-type operators for each $r$.
The $b$-deformed version of the above partition function has been studied by Chapuy-Dołęga in \cite{cd}
using the combinatorics of Jack polynomials,
and they also derived a cut-and-join representation,
see \cite[(62)]{cd}.
(After this work has been completed,
it was pointed out by Dołęga that equation \eqref{eq-cutandjoin-intro} can also be derived
from their cut-and-join representation.)

The rest of this paper is arranged as follows.
In \S \ref{sec-pre} we recall some preliminaries of partitions of integers and Hurwitz numbers.
In \S \ref{sec-mainthm} we give a proof of Theorem \ref{thm-intro-main}.
In \S \ref{sec-gdessin} we apply the main theorem to the generalized dessin counting
to obtain the polynomiality in this case,
and derive the cut-and-join type representation for the generalized dessin partition function.
Finally in \S \ref{sec-onepart} we apply the main theorem to the $m$-part
double Hurwitz numbers with completed cycles.

\section{Preliminaries of Hurwitz Numbers}
\label{sec-pre}

In this section we recall some preliminaries of partitions of integers,
symmetric functions, and Hurwitz numbers.
The materials in this section can be found in \cite{mac, cm}.

\subsection{Partitions of integers and Schur functions}

First we recall the notions of partitions of integers and Schur functions.
For reference, see \cite[\S I]{mac}.

A partition of a nonnegative integer $n$ is a sequence of nonnegative integers
$\lambda=(\lambda_1,\lambda_2,\cdots,\lambda_{l})$
satisfying $\lambda_1\geq \cdots\geq \lambda_{l}> 0$ and $|\lambda|=\lambda_1+\cdots+\lambda_l = n$.
We will denote by $\cP$ the set of all partitions of integers.
In particular,
the empty partition $\lambda = (\emptyset ) \in \cP$ is a partition of $0$.

The number $l$ appearing in a partition $\lambda=(\lambda_1,\cdots,\lambda_{l})$ is called the length of $\lambda$
and will be denoted by $l(\lambda)$.
Let $\lambda = (\lambda_1,\cdots \lambda_l)$ be a partition,
and denote by $m_i (\lambda)$ the number of $i$ appearing in the sequence $(\lambda_1,\cdots \lambda_l)$.
We will use the following notation:
\be\label{eqn:def zmu}
z_\lambda = \prod_{i\geq 1} i^{m_i(\lambda)}\cdot m_i(\lambda)!,
\qquad \forall \lambda\in \cP.
\ee

There is a one-to-one correspondence between the set of partitions of integers
and the set of Young diagrams.
The Young diagram corresponding to $\lambda=(\lambda_1,\cdots,\lambda_{l(\lambda)})$
consists of $|\lambda|$ boxes,
such that there are exactly $\lambda_i$ boxes in the $i$-th row.
Let $\lambda=(\lambda_1,\cdots,\lambda_{l(\lambda)})$ be a partition,
then the transpose $\lambda^t=(\lambda_1^t,\cdots,\lambda_m^t)$ of $\lambda$ is the partition given by
$m=\lambda_1$ and $\lambda_j^t = |\{i| \lambda_i\geq j\}| = \sum_{k\geq j} m_k(\lambda)$.
The Young diagram associated with $\lambda^t$ can be obtained from the Young diagram associated with $\lambda$
by flipping along the main diagonal.
It is clear that $(\lambda^t)^t =\lambda$ for every partition $\lambda$.
The Frobenius notation of a partition $\lambda$ is defined to be:
\begin{equation*}
\lambda= (m_1,m_2,\cdots,m_k | n_1,n_2,\cdots,n_k),
\end{equation*}
where $k$ is the number of boxes on the main diagonal of the Young diagram,
and
\begin{equation*}
m_i = \lambda_i - i, \qquad n_i=\lambda_i^t - i, \qquad i=1,2,\cdots,k.
\end{equation*}

Let $\lambda=(\lambda_1,\cdots,\lambda_{l(\lambda)})$ be a partition of integer.
Let $\square$ be a box in the Young diagram associated with $\lambda$,
and in such a case we will use the notation $\square\in \lambda$.
If $\square$ is at the $i$-th row and $j$-th column of the Young diagram,
then the hook length of the box $\square$ is defined to be:
\be
h(\square) = (\lambda_i-i) +(\lambda_j^t-j) +1,
\ee
and the content $c(\square)$ is defined to be:
\be
c(\square) = j-i.
\ee

Now we recall the definition of the Schur function $s_\lambda$ indexed by a partition $\lambda$.
First consider the special case $\lambda=(m|n) = (m+1,1^n)$.
The Young diagram associated with such a partition is a hook,
and in this case $s_{(m|n)}$ is defined by:
\begin{equation*}
s_{(m|n)}= h_{m+1}e_n - h_{m+2}e_{n-1} + \cdots
+ (-1)^n h_{m+n+1},
\end{equation*}
where $h_n$ and $e_n$ are the
complete symmetric function and elementary symmetric function of degree $n$ respectively.
For a general $\lambda=(m_1,\cdots,m_k|n_1,\cdots,n_k)$,
the Schur function $s_\lambda$ is defined by:
\begin{equation*}
s_\lambda  = \det (s_{(m_i|n_j)} )_{1\leq i,j\leq k}.
\end{equation*}
There are also equivalent definitions for $s_\lambda$:
\begin{equation*}
s_\lambda = \det(h_{\lambda_i-i+j})_{1\leq i,j\leq n}
= \det(e_{\lambda_i^t-i+j})_{1\leq i,j\leq m},
\end{equation*}
for $n\geq l(\lambda)$ and $m\geq l(\lambda^t)$.
The Schur function indexed by the empty partition is defined to be $s_{(\emptyset)} = 1$.
Denote by $\Lambda$ the space of all symmetric functions,
then the set of all Schur functions $\{s_\lambda\}_{\lambda}$ is a basis of  $\Lambda$.

It is well-known that $\{p_\lambda\}$ gives another basis for $\Lambda$,
where $p_\lambda = p_{\lambda_1}\cdots p_{\lambda_l}$ for a partition $\lambda = (\lambda_1,\cdots,\lambda_l)$
and $p_k$ is the Newton symmetric function of degree $k$.
The above two bases for $\Lambda$ are related by (see e.g. \cite{mac}):
\be
\label{eq-newton-schur}
s_\mu = \sum_{\lambda} \frac{\chi^\mu (C_\lambda)}{z_\lambda} p_\lambda,
\ee
where $\chi^\mu$ is the irreducible character of the symmetric group $S_{|\mu|}$ indexed by the partition $\mu$,
and $C_\lambda \subset S_{|\mu|}$ is the conjugacy class whose cycle type is $\lambda$.
The summations on the right-hand sides of \eqref{eq-newton-schur} are over all partitions $\lambda$
with $|\lambda|=|\mu|$.
For an introduction of the representation theory of $S_n$,
see e.g. \cite[\S 4]{fh}.

In some literatures (see e.g. \cite{ly}),
the authors regard $s_\mu$ as a function in the variables $\bm t = (t_1,t_2,\cdots)$ via \eqref{eq-newton-schur}
where $t_n = \frac{p_n}{n}$,
and call it the Schur polynomial indexed by $\mu$.
In fact,
if one defines a degree by setting $\deg (t_n) = n$,
then  $s_\mu (\bm t)$ is a homogeneous polynomial of degree $|\mu|$ in $t_1,t_2,\cdots,t_{|\mu|}$.

\subsection{Hurwitz numbers labeled by $m$ partitions}

First we recall the definition of Hurwitz numbers.
Here we only consider the branched coverings of the Riemann sphere $\bC\bP^1$.

Let $d$ be a positive integer,
and let $\mu^{(1)},\mu^{(2)}\cdots,\mu^{(m)}$ be $m$ partitions of $d$.
Let $ f : \Sigma_g \to \bC\bP^1$ be a branched covering of the Riemann sphere
by a (not necessarily connected) Riemann surface $\Sigma_g$ of genus $g$,
such that there are $m$ branch points $q_1,\cdots,q_m \in \bC\bP^1$,
and the ramification type of $ f $ over $q_i$ is $\mu^{(i)}$.
More precisely,
the preimage of the point $q_i$ consists of $l(\mu^{(i)})$ distinct points $p_1,\cdots, p_{l(\mu^{(i)})}$ on $\Sigma_g$,
such that $f$ locally looks like $z\mapsto z^{\mu^{(i)}_j}$ in some local coordinate $z$ near $p_j$.
The Riemann-Hurwitz formula tells that:
\be
\label{eq-RiemHur}
2g-2 = d(m-2) -\sum_{i=1}^m l(\mu^{(i)}).
\ee
The (disconnected) Hurwitz number $H_{g}^\bullet(\mu^{(1)},\cdots,\mu^{(m)})$ is defined to be
the weighted counting of all such branched coverings $f$:
\begin{equation*}
H_{g}^\bullet(\mu^{(1)},\cdots,\mu^{(m)}) =
\sum_ f  \frac{1}{|\Aut( f )|},
\end{equation*}
where $\Aut( f )$ is the group of automorphisms of the branched covering $f$.
Similarly,
one can define the connected Hurwitz number $H_{g}^\bullet(\mu^{(1)},\cdots,\mu^{(m)})$ by
\begin{equation*}
H_{g}^\circ (\mu^{(1)},\cdots,\mu^{(m)}) =
\sum_{f: \text{ connected}}  \frac{1}{|\Aut( f )|},
\end{equation*}
where the summation is over branched coverings $f$ of ramification type $\mu^{(1)},\cdots,\mu^{(m)}$
from a connected Riemann surface $\Sigma_g$.

\subsection{Group-theoretic description of Hurwitz numbers}

There is a group-theoretic description of Hurwitz numbers,
which leads naturally to the concept of constellations.

Let $ f : \Sigma_g \to \bC\bP^1$ be a branched covering of degree $d$
with $m$ branch points $q_1, q_2, \cdots,q_m \in \bC\bP^1$,
such that the ramification type over $q_i$ is described by a partition $\mu^{(i)}$.
Choose a point $x_0 \in \bC\bP^1\backslash\{q_1,\cdots,q_m\}$,
then the map $f$ induces a group homomorphism
(see e.g. \cite[\S 7]{cm} for details)
\begin{equation*}
\Phi:
\pi_1 (\bC\bP^1 \backslash \{q_1,\cdots,q_m\})
\to S_d,
\end{equation*}
called the monodromy representation,
where $S_d$ is the permutation group of the points in the preimage $f^{-1}(x_0)$ of $x$,
and
\begin{equation*}
\pi_1 (\bC\bP^1 \backslash \{q_1,\cdots,q_m\})
= \langle
\gamma_1,\cdots, \gamma_m |
\gamma_1\cdots\gamma_m = e \rangle.
\end{equation*}
Here $\gamma_i$ denotes the homotopy class of a small loop around $q_i$,
and the cycle type of the permutation $\Phi (\gamma_i)$ is given by the partition $\mu^{(i)}$.
The monodromy representation is called connected if its image $Im(\Phi)$ acts transitively on $\{1,2,\cdots,d\}$.
The connectedness of a branched covering $f$ is equivalent to
the connectedness of the associated monodromy representation.

Conversely,
one can also construct a branched covering $f$ of $\bC\bP^1$ (up to equivalence)
from a given monodromy representation.
Thus one obtains a bijection
\begin{equation*}
\{[f: \Sigma_g \to \bC\bP^1]\} \to
\Hom \big( \pi_1 (\bC\bP^1 \backslash \{q_1,\cdots,q_m\}), S_d \big) / \sim,
\end{equation*}
where $[f]$ denotes the equivalence classes of $f$,
and $\sim$ is the conjugation by elements in $S_d$.
Then the Hurwitz numbers can also be defined algebraically as (see \cite{cm,lz}):
\be
\label{eq-Hurdisconn-gp}
H_{g}^\bullet(\mu^{(1)},\cdots,\mu^{(m)})
= \frac{1}{d!}
\Big| \big\{ ( \alpha_1,\cdots, \alpha_m )\in (S_d)^m \big|
\alpha_1\cdots \alpha_m = e,
\text{ }\alpha_i\in C_i \big\} \Big|,
\ee
where $d=|\mu^{(i)}|$,
and $C_i\subset S_d$ is the conjugacy class whose cycle type is given by $\mu^{(i)}$.
By definition,
the Hurwitz numbers is equal to zero unless $g$ and $\mu^{(i)}$ satisfy the Riemann-Hurwitz formula \eqref{eq-RiemHur}.
And similarly,
\be
H_{g}^\circ(\mu^{(1)},\cdots,\mu^{(m)})
= \frac{1}{d!}
\Bigg| \Big\{  (\alpha_1,\cdots, \alpha_m)   \in (S_d)^m \Big|
\begin{array}{l}
\alpha_1\cdots \alpha_m = e, \text{ }
\alpha_i\in C_i, \text{ and}  \\
 \text{$\langle \alpha_1,\cdots,\alpha_m \rangle$ acts transitively}
\end{array}
\Big\} \Bigg|.
\ee

This construction leads naturally to the notion of constellations, see e.g. \cite[\S 1]{lz} for reference.
A $k$-constellation is a sequence $[\sigma_1,\cdots,\sigma_k]$ of permutations $\sigma_i \in S_d$
such that:
\begin{itemize}
\item[1)]
$\sigma_1 \sigma_2\cdots \sigma_k = e \in S_d$;
\item[2)]
the subgroup $\langle \sigma_1,\cdots, \sigma_k \rangle$
acts transitively on the set $\{1,\cdots,d\}$.
\end{itemize}
Here $k$ and $d$ are called the length and degree of this constellation respectively.
Then the computation of connected Hurwitz numbers of genus $g$ and ramification type $(\mu^{(1)},\cdots,\mu^{(k)})$,
is actually equivalent to the enumeration of constellations $[\sigma_1,\cdots,\sigma_k]$ of length $k$ and degree $d$,
such that the cycle type of each $\sigma_i$ is exactly $\mu^{(i)}$.
The genus $g$ is determined from $\mu^{(i)}$ by the Riemann-Hurwitz formula \eqref{eq-RiemHur}.

\subsection{Burnside formula for Hurwitz numbers}

One can compute the disconnected Hurwitz numbers algebraically
using the above group-theoretic description.
Let $C_i\subset S_d$ be the conjugacy class in $S_d$ whose cycle type is given by the partition $\mu^{(i)}$,
then the following famous result is called the Burnside character formula
(see e.g. \cite[\S 9]{cm} for a proof):
\be
\label{eq-Burnside}
H_g^\bullet (\mu^{(1)},\cdots,\mu^{(m)})
= \sum_{|\eta| =d} \Big(\frac{\dim V^\eta}{d!}\Big)^2 \cdot
\prod_{i=1}^m \frac{|C_i|\cdot \chi^\eta (C_i)}{\dim V^\eta},
\ee
where
$V^\eta$ denotes the irreducible representation of $S_d$ indexed by the partition $\eta$,
and $\chi^\eta$ is the irreducible character associated with $V^\eta$.
The summation in the right-hand side is over all partitions $\eta$ of $d$.

The numbers $|C_i|$, $\chi^\eta (C_i)$, and $\dim V^\eta$ appearing in the right-hand side
of the above formula can all be represented combinatorially in terms of the partitions $\eta$ and $\mu^{(i)}$.
The size of the conjugacy class $C_i$ is:
\be
\label{eq-conjclasssize}
|C_i| = \frac{d!}{\prod_{k\geq 1} k^{m_k}\cdot m_k(\mu^{(i)})!}= \frac{d!}{z_{\mu^{(i)}}},
\ee
and the dimension of $V^\eta$ is given by the hook length formula:
\be
\label{eq-dim-hook}
\dim V^\eta = \frac{d!}{ \prod_{\square\in \eta} h(\square)},
\ee
where $\square\in \eta$ means $\square$ runs over boxes in the Young diagram associated with $\eta$,
and $h(\square)$ is the hook length of the box $\square$.
The evaluation of the character $\chi^\eta (C_i)$ can be computed using the Frobenius formula,
and for details we refer to \cite[\S 4]{fh}.

\section{$\bN$-Coefficient Binomial Polynomiality of Hurwitz Numbers}
\label{sec-mainthm}

In this section we describe and prove the $\bN$-coefficient binomial polynomiality
of Hurwitz numbers and then give some examples.
The proof is inspired by the work \cite{st} of Stanley.

\subsection{$\bN$-Coefficient binomial polynomiality}
\label{sec-Npol-H-pf}

For a nonnegative integer $d$,
we denote by $\cP_d$ the set of all partitions of $d$.
Now let $r$ be a positive integer,
and let $\lambda^1,\lambda^2,\cdots,\lambda^r$ and $\mu$ be some partitions of integers
such that $|\mu|\geq |\lambda^i|$ for every $i$.
For every positive integer $k$,
denote:
\be
H_{\lambda^1,\cdots,\lambda^r;k}^\circ (\mu)
= \sum_{\substack{ \sigma\in \cP_{|\mu|} \\ \text{s.t. } l(\sigma) = k }} H_{g}^\circ
(\tilde\lambda^1,\cdots,\tilde\lambda^r,\sigma,\mu),
\ee
where $\tilde\lambda^i$ is the partition
$(\lambda^i,1^{|\mu|-|\lambda^i|}) = (\lambda^i_1,\cdots,\lambda^i_{l(\lambda^i)},1,\cdots,1)$ of $|\mu|$.
The genus $g$ in the right-hand side is uniquely determined by the Riemann-Hurwitz formula:
\be
\begin{split}
2g-2 =
|\mu|\cdot r - \sum_{i=1}^r l(\tilde\lambda^i) - l(\sigma) -l(\mu)
= \sum_{i=1}^r \big(|\lambda^i| - l(\lambda^i)\big) -k- l(\mu).
\end{split}
\ee
Let $z_\mu$ be the number defined by \eqref{eqn:def zmu},
and we have:
\begin{Theorem}
\label{thm-binompol-H}
Fix three positive integers $r,k,l>0$ and $r$ partitions of integers $\lambda^1,\cdots,\lambda^r$.
Let $\mu = (\mu_1,\cdots,\mu_l)$ be a partition of length $l$ with $|\mu|\geq |\lambda^i|$ for every $i$,
then
\be
z_\mu \cdot H_{\lambda^1,\cdots,\lambda^r;k}^\circ (\mu)
\ee
is a linear combination of
\be
\binom{\mu_1}{d_1} \binom{\mu_2}{d_2} \cdots \binom{\mu_l}{d_l}
\ee
with non-negative integer coefficients,
where $d_j\geq 1$ for every $1\leq j\leq l$, and
\be
\label{eq-deg-ineq}
\sum_{j=1}^l d_j \leq \sum_{i=1}^r 2 \big( |\lambda^i| - l(\lambda^i)\big).
\ee
\end{Theorem}
\begin{proof}
Denote $n= |\mu|$.
For a partition $\lambda$ of $n$,
we denote by $C_\lambda \subset S_n$ the conjugation class of permutations
whose cycle type is $\lambda$.
And for a permutation $\alpha\in S_n$,
denote by $l(\alpha)$ the number cycles in $\alpha$
(i.e., the length of the cycle type of $\alpha$).
The group-theoretic description of Hurwitz numbers tells that:
\begin{equation*}
n! \cdot  H_{\lambda^1,\cdots,\lambda^r;k}^\circ (\mu)
= \Bigg| \Bigg\{ (\alpha_1,\cdots,\alpha_r,\beta,\gamma)\in  (S_n)^{r+2} \Bigg|
\begin{array}{l}
\alpha_i \in C_{\tilde\lambda^i}, \text{ } l(\beta) = k, \text{ } \gamma\in C_\mu, \\
 \alpha_1\alpha_2\cdots\alpha_r\beta\gamma = e, \text{ and}\\
\text{$\langle \alpha_1,\cdots,\alpha_r,\beta,\gamma \rangle$ acts transitively}
\end{array}
\Bigg\} \Bigg|.
\end{equation*}
Recall that
$z_\mu = n!/|C_\mu|$ is the order of the centralizer of an element in $C_\mu$.
Moreover,
the order of the set in the right-hand side
is independent of the choice of $\gamma \in C_\mu$,
and thus one can fix $\gamma \in C_\mu$ to be:
\be
\label{eq-def-gamma0}
\gamma_0 = (1,2,\cdots,\mu_1)(\mu_1+1,\cdots,\mu_1+\mu_2)\cdots
(\sum_{j=1}^{l-1}\mu_j +1 ,\cdots, \sum_{j=1}^l \mu_j),
\ee
and then:
\be
\label{eq-Hbinompol-pf1}
\begin{split}
&z_\mu \cdot H_{\lambda^1,\cdots,\lambda^r;k}^\circ (\mu) \\
=& \Bigg|
\Big\{ (\alpha_1,\cdots,\alpha_r,\beta) \in  (S_n)^{r+1} \Big|
\begin{array}{l}
\alpha_i \in C_{\tilde\lambda^i}, \text{ } l(\beta) = k,  \text{ }
 \alpha_1\cdots\alpha_r\beta\gamma_0 = e, \\
\text{and $\langle \alpha_1,\cdots,\alpha_r,\beta,\gamma_0 \rangle$ acts transitively}
\end{array}
\Big\} \Bigg|.
\end{split}
\ee
Here $\beta$ is uniquely determined by the choice of $\alpha_1,\cdots,\alpha_r$:
\begin{equation*}
\beta = \alpha_r^{-1} \alpha_{r-1}^{-1} \cdots \alpha_1^{-1} \gamma_0^{-1}.
\end{equation*}
Now denote $[n] = \{1,2,\cdots,n\}$,
and for $1\leq i\leq r$ define:
\be
F_i = \big\{ j\in [n] \big| \text{$j$ is not fixed by $\alpha_i:[n]\to [n]$} \big\},
\ee
and define
\be
F = \bigcup_{i=1}^r F_i \subset [n].
\ee
By the definition of $F$ we know that $[n]\backslash F \subset [n]$ is the subset of elements
fixed by every $\alpha_i$.
Notice that $n-|F_i|$ is the number of $1$-cycles in $\alpha_i$,
and $\alpha_i \in C_{\tilde\lambda^i} \subset S_n$ contains
\begin{equation*}
l(\alpha_i) = l(\tilde\lambda^i) = l(\lambda^i) + n -|\lambda^i|
\end{equation*}
cycles in total,
and thus it contains $\big(l(\alpha_i) - \big(n-|F_i|)\big)$ cycles of size strictly greater than $1$.
Then one has:
\be
\label{eq-ineq-deg-pf}
n \geq 1\cdot (n-|F_i|) +2\cdot \big(l(\alpha_i) - \big(n-|F_i|)\big),
\ee
which implies:
\begin{equation*}
|F_i| \leq 2|\lambda^i| -2 l(\lambda^i).
\end{equation*}
And then the size of $F$ has an upper bound:
\be
\label{eq-upperbd-pf1}
|F| \leq \sum_{i=1}^r |F_i|
\leq \sum_{i=1}^r 2\big(|\lambda^i| - l(\lambda^i)\big).
\ee

For $1\leq s \leq l$,
define
\be
\label{eq-def-Gammas}
\Gamma_s = \Big\{ \sum_{j=1}^{s-1} \mu_j +1, \sum_{j=1}^{s-1} \mu_j +2, \cdots, \sum_{j=1}^s \mu_j \Big\}
\subset [n]
\ee
to be the numbers appearing in the $s$-th cycle in $\gamma_0$.
Now for a choice of permutations $\alpha_1,\cdots,\alpha_r\in S_n$ satisfying
the conditions in the right-hand side of \eqref{eq-Hbinompol-pf1},
we have:
\begin{equation*}
F = (F\cap \Gamma_1) \sqcup (F\cap \Gamma_2) \sqcup \cdots \sqcup (F\cap \Gamma_l).
\end{equation*}
Moreover, one has
\begin{equation*}
F\cap \Gamma_s \not= \emptyset, \qquad \forall 1\leq s\leq l,
\end{equation*}
since the action of $\langle \alpha_1,\cdots, \alpha_r, \gamma_0\rangle$ on $[n]$ is transitive.

Now let $d_1, d_2,\cdots,d_l$ be some integers with $1\leq d_s \leq \mu_s$.
We fix $d_s$ distinct numbers
\begin{equation*}
b_{s,1},b_{s,2},\cdots, b_{s,d_s} \in
\Gamma_s,
\end{equation*}
and denote $\bm b_s = \{ b_{s,1},b_{s,2},\cdots, b_{s,d_s}\}$.
Denote:
\begin{equation}
C_{\bm b_1,\cdots, \bm b_l} =
\Bigg\{ (\alpha_1,\cdots,\alpha_r) \in  (S_n)^r \Bigg|
\begin{array}{l}
\alpha_i \in C_{\tilde\lambda^i},
\text{ } l(\alpha_r^{-1} \alpha_{r-1}^{-1} \cdots \alpha_1^{-1} \gamma_0^{-1}) = k, \\
\text{$\langle \alpha_1,\cdots,\alpha_r,\gamma_0 \rangle$ acts transitively on $[n]$,} \\
\text{and $F\cap \Gamma_s = \bm b_s$ for every $s$}
\end{array}
\Bigg\},
\end{equation}
where $F\subset [n]$ is the subset of elements which are not fixed by at least one of
these $\alpha_1,\cdots,\alpha_r$.
Then by \eqref{eq-Hbinompol-pf1} we have:
\begin{equation*}
z_\mu \cdot H_{\lambda^1,\cdots,\lambda^r;k}^\circ (\mu) =
\sum_{d_1,\cdots,d_l \geq 1}
\sum_{\substack{\bm b_s \subset \Gamma_s\\ \text{s.t. }|\bm b_s| = d_s}}
|C_{\bm b_1,\cdots, \bm b_l}|.
\end{equation*}
We now claim:
\begin{itemize}
\item[1)]
For fixed numbers $d_1\cdots,d_l\geq 1$,
the size $|C_{\bm b_1,\cdots, \bm b_l}|$ is independent of the choice of the subsets $\bm b_s \subset \Gamma_s$;
\item[2)]
the size $|C_{\bm b_1,\cdots, \bm b_l}|$ is independent of the choice of $\mu=(\mu_1,\cdots,\mu_l)$.
\end{itemize}
If the two claims are true,
then we simply denote the size $|C_{\bm b_1,\cdots, \bm b_l}|$ by $c_{d_1,\cdots,d_l} \in \bN$
which is independent of $\mu$,
and then:
\begin{equation}
z_\mu \cdot  H_{\lambda^1,\cdots,\lambda^r;k}^\circ (\mu) =
\sum_{d_1,\cdots,d_s \geq 1}
c_{d_1,\cdots,d_s} \cdot \binom{\mu_1}{d_1}\binom{\mu_2}{d_2}\cdots \binom{\mu_l}{d_l}.
\end{equation}
Here $d_1+\cdots d_l = |F| \leq  \sum_{i=1}^r 2\big(|\lambda^i| - l(\lambda^i)\big) $
by \eqref{eq-upperbd-pf1},
and in this way we obtain the conclusion of the theorem.
In what follows we prove the two claims.

For simplicity we first consider the case $r=1$ and $l=1$.
Let $\mu = (n)$ and $\alpha\in S_n$,
and let $F\subset [n]$ be the elements not fixed by $\alpha$,
then the restriction $\alpha|_F :F\to F$ is a permutation of the subset $F$.
Notice that if a number $p\in [n]$ is not in $F$,
then one always has
\begin{equation*}
\big( \alpha^{-1}\cdot (n,n-1,\cdots,1) \big) (p+1) = p,
\end{equation*}
and thus $p$ and $p+1$ are in the same cycle and $p+1$ is followed by $p$.
In this case we simply package $k$ and $p+1$ together;
and if $p+1$ is still not in $F$, then we package $p,p+1$ and $p+2$ together.
Repeat this procedure (and we use the convention $n+1=1$),
and we partition the set $[n]$ into some subsets such that
these subsets are in one-to-one correspondence with $F$.
Such a subset is always contained in
a cycle of the product $\alpha^{-1}\cdot (n,n-1,\cdots,1)$
(and a cycle may contain several such subsets).
Then one can simply rename such a subset $\{p,p+1,\cdots,p+s\}$ as $\overline{p+s}$,
and in this way one sees that the result is actually independent of
the choice of $n$ and the choice of $F\subset[n]$
as long as the size $|F|$ is fixed.

For example,
let $\alpha = (153)$,
and then:
\begin{equation*}
(135)^{-1}\cdot (n,n-1,\cdots,1) = (n,n-1,\cdots,6,3,2,5,4,1),
\end{equation*}
Here $2$ is fixed by $\alpha$ and thus
in the product $3$ is followed by $2$,
and thus we package $2,3$ together and denote this package by $\overline{3}$.
Similarly,
we package $4,5$ together and denote this package by $\overline{5}$;
and package $6,7,\cdots,n,1$ together and denote this package by $\overline{1}$.
Then the above product can be represented as:
\begin{equation*}
(\overline{1},\overline{3},\overline{5})^{-1}\cdot (\overline{1},\overline{5},\overline{3})
= (\overline{1}, \overline{3}, \overline{5}).
\end{equation*}
If one is only concerned with the number of cycles in the product,
then one can further rename $\overline{1},\overline{3},\overline{5}$
as $1,2,3$ respectively
and the above equality is actually equivalent to the computation $(123)^{-1}\cdot(132) = (123)$.
In this way one reduces the computations for a general $n$ and $F=\{1,3,5\}\subset[n]$
to the simpler case $n=3$ and $F=\{1,2,3\}$.

For a general $r>0$ and $l>0$,
the proof is essentially the same as the above special case.
In fact,
one needs to consider the cycles within each subset $\Gamma_s \subset [n]$
and partition each cycle into some subsets
by letting the elements in $[n]\backslash F$ be absorbed into elements in $F$
where $[n]\backslash F$ is the subset of elements fixed by each $\alpha_i$.
Here we do not repeat the details.
\end{proof}

\begin{Remark}
If there exists an $i$ such that $\lambda^i$ contains a cycle of size greater than $2$,
then the inequality \eqref{eq-ineq-deg-pf} is strict
and thus the degree bound inequality \eqref{eq-deg-ineq} is strict.
\end{Remark}

\begin{Remark}
\label{rmk-disconnpol}
The same method can be applied to prove
the polynomiality of the disconnected Hurwitz numbers $H_{\lambda^1,\cdots,\lambda^r;k}^\bullet (\mu)$.
One can simply delete the transitivity condition in the
group-theoretic descriptions above
and obtain the proof of the disconnected case.
\end{Remark}

As a straightforward consequence of the above theorem,
we have:
\begin{Corollary}
Let three positive integers $r,k,l>0$ and $r$ partitions $\lambda^1, \lambda^2,\cdots,\lambda^r$ be fixed.
Then for a partition $\mu = (\mu_1,\cdots,\mu_l)$ of length $l$ with $|\mu| \geq |\lambda^i|$ for every $1\leq i\leq r$,
the Hurwitz number $z_\mu \cdot H_{\lambda^1,\cdots,\lambda^r;k}^\circ (\mu)$ defined above is a polynomial
with rational coefficients in $\mu_1,\mu_2,\cdots,\mu_l$
of degree less than or equals to $\sum_{i=1}^r 2 \big( |\lambda^i| - l(\lambda^i)\big)$.
\end{Corollary}

\subsection{Examples}

Here we give some examples of the $H_{\lambda^1,\cdots,\lambda^r;k}^\circ (\mu)$
for small $l$ and $\lambda^i$.

\begin{Example}
First consider the case $l=1$,
and we denote $\mu = (n)$.
In the simplest nontrivial case $\lambda = (2)$,
for every $k\geq 1$ we have:
\begin{equation*}
n\cdot H_{(2);k}^\circ \big((n)\big) = \sum_{d=1}^2 c_{k,d}^{(2)}\cdot \binom{n}{d},
\end{equation*}
where:
\begin{equation*}
c_{k,d}^{(2)} =
\Big|
\big\{ \alpha \in S_n \big|
\alpha \in C_{(2,1,\cdots,1)},
\text{ } l\big(\alpha^{-1}\cdot (n,n-1,\cdots,1)\big) = k,
\text{ } F = \{1,2,\cdots,d\}
\big\}
\Big|.
\end{equation*}
where $F\subset [n]$ is the subset of elements not fixed by $\alpha$.
Here $\alpha$ is a transposition and thus $c_{k,d}^{(2)} = 0$ unless $d=2$,
and then one has $\alpha = (12)$.
Notice that
\begin{equation*}
(12)^{-1}\cdot (n,n-1,\cdots,1)
= (n,n-1,\cdots,3,1)(2),
\end{equation*}
and thus $c_{k,2}^{(2)} = \delta_{k,2}$,
and we conclude that
\begin{equation*}
n\cdot H_{(2);k}^\circ \big((n)\big) = \delta_{k,2} \cdot \binom{n}{2}
= \delta_{k,2}\cdot \half n(n-1).
\end{equation*}

For $\lambda = (3)$ we have:
\begin{equation*}
n\cdot H_{(3);k}^\circ \big((n)\big) = \sum_{d=1}^4 c_{k,d}^{(3)}\cdot \binom{n}{d},
\end{equation*}
where:
\begin{equation*}
c_{k,d}^{(3)} =
\Big|
\big\{ \alpha \in S_n \big|
\alpha \in C_{(3,1,\cdots,1)},
\text{ } l\big(\alpha^{-1}\cdot (n,n-1,\cdots,1)\big) = k,
\text{ } F = \{1,2,\cdots,d\}
\big\}
\Big|.
\end{equation*}
Here $c_{k,d}^{(3)} = 0$ unless $d=3$ since $\alpha$ is a $3$-cycle.
Notice that:
\begin{equation*}
\begin{split}
&(123)^{-1} \cdot (n,n-1,\cdots,1) = (n,n-1,\cdots,4,2,3,1),\\
&(132)^{-1} \cdot (n,n-1,\cdots,1) = (n,n-1,\cdots,4,1)(2)(3),
\end{split}
\end{equation*}
and thus $c_{k,3}^{(3)} = \delta_{k,1} + \delta_{k,3}$,
and we conclude that:
\begin{equation*}
n\cdot H_{(3);k}^\circ \big((n)\big) = (\delta_{k,1} + \delta_{k,3}) \cdot \binom{n}{3}.
\end{equation*}

Similarly for $\lambda = (2,2)$, one has $d=4$. And by
\begin{equation*}
\begin{split}
&\big((12)(34)\big)^{-1} \cdot (n,n-1,\cdots,1) = (n,n-1,\cdots,5,3,1)(4)(2),\\
&\big((13)(24)\big)^{-1} \cdot (n,n-1,\cdots,1) = (n,n-1,\cdots,5,2,3,4,1),\\
&\big((14)(23)\big)^{-1} \cdot (n,n-1,\cdots,1) = (n,n-1,\cdots,5,1)(24)(3),\\
\end{split}
\end{equation*}
one concludes that:
\begin{equation*}
n\cdot H_{(2,2);k}^\circ \big((n)\big) = (\delta_{k,1} + 2 \delta_{k,3}) \cdot \binom{n}{4}.
\end{equation*}
\end{Example}

\begin{Example}
We give an example for $l=2$.
Let $\mu = (\mu_1,\mu_2)$ and $\lambda = (2)$,
then:
\begin{equation*}
z_\mu\cdot H_{(2);k}^\circ \big((\mu_1,\mu_2)\big)
= \sum_{d_1,d_2\geq 1} c_{k;d_1,d_2}^{(2)}
\cdot \binom{\mu_1}{d_1}\binom{\mu_2}{d_2}.
\end{equation*}
Here $d_1+d_2\leq 2(|\lambda| - l(\lambda))=2$,
and thus $d_1=d_2=1$.
Denote $n=\mu_1+\mu_2$,
and one has:
\begin{equation*}
c_{k;1,1}^{(2)} =
\Bigg|
\Bigg\{ \alpha \in S_n \Bigg|
\begin{array}{l}
\alpha \in C_{(2,1,\cdots,1)},
\text{ } l\big(\alpha^{-1}\cdot \gamma_0^{-1}\big) = k, \\
F\cap \Gamma_1 = \{1\},
\text{ } F\cap \Gamma_2 = \{\mu_1+1\}, \\
\text{and $\langle \alpha,\gamma_0 \rangle$ acts transitively on $[n]$}
\end{array}
\Bigg\}
\Bigg|,
\end{equation*}
where $\gamma_0 = (1,2,\cdots,\mu_1)(\mu_1+1,\mu_1+2,\cdots, n)$,
and
\begin{equation*}
\Gamma_1 = \{1,2,\cdots, \mu_1\},
\qquad
\Gamma_2 = \{\mu_1+1,\mu_1+2,\cdots, n\}.
\end{equation*}
Therefore $\alpha$ must be the transposition $(1,\mu_1+1)$.
Notice that
\begin{equation*}
(1,\mu_1+1)^{-1}\cdot \gamma_0^{-1} =
(\mu_1,\mu_1-1,\cdots,2,\mu_1+1,n,n-1,\cdots,\mu_1+2,1),
\end{equation*}
and thus $c_{k;1,1}^{(2)} = \delta_{k,1}$ and
\begin{equation*}
z_\mu\cdot H_{(2);k}^\circ \big((\mu_1,\mu_2)\big)
= \delta_{k,1}\cdot \binom{\mu_1}{1}\binom{\mu_2}{1}
= \delta_{k,1}\cdot \mu_1\mu_2.
\end{equation*}
\end{Example}

\begin{Example}
\label{eg-3.3pol}
Here we consider an example for $l=1$ and $r=2$.
Let $\lambda^1 = \lambda^2 =(2)$, and denote $\mu = (n)$.
Then:
\begin{equation*}
n\cdot H_{(2),(2);k}^\circ \big((n)\big) = \sum_{d=1}^4 c_{k,d}^{(2),(2)}\cdot \binom{n}{d},
\end{equation*}
where
\begin{equation*}
c_{k,d}^{(2),(2)} =
\Bigg|
\Bigg\{ \alpha_1,\alpha_2 \in S_n \Bigg|
\begin{array}{l}
\alpha_1,\alpha_2 \in C_{(2,1,\cdots,1)},
\text{ } F=\{1,2,\cdots,d\}, \\
\text{and } l\big(\alpha_2^{-1} \alpha_1^{-1} \cdot (n,n-1,\cdots,1)\big) = k
\end{array}
\Bigg\}
\Bigg|,
\end{equation*}
where $F\subset [n]$ is the subset of elements not fixed by $\alpha_1$ or $\alpha_2$.
Here $\alpha_1$, $\alpha_2$ are two transpositions and thus $2\leq d\leq 4$.
\begin{itemize}
\item[1)]
If $d=2$,
then $\alpha_1 = \alpha_2  = (12)$,
and then $k=l\big((n,n-1,\cdots,1)\big)=1$.
Thus the case $d=2$ contributes a term $\delta_{k,1} \cdot \binom{n}{2}$ to
$n\cdot H_{(2),(2);k}^\circ \big((n)\big)$.

\item[2)]
If $d=3$,
there are $6$ choices of the pair $(\alpha_1,\alpha_2)$,
and in $3$ cases $\alpha_2^{-1} \alpha_1^{-1} = (123)$
while in other $3$ cases $\alpha_2^{-1} \alpha_1^{-1} = (132)$.
Thus the case $d=3$ contributes $(3\delta_{k,1}+3\delta_{k,3}) \cdot \binom{n}{3}$.

\item[2)]
If $d=4$,
$\alpha_2^{-1} \alpha_1^{-1} = (12)(34)$, $(13)(24)$, or $(14)(23)$,
and in each case there are two possible choices of the pair $(\alpha_1,\alpha_2)$.
Thus the case $d=4$ contributes $(2\delta_{k,1}+4\delta_{k,3}) \cdot \binom{n}{4}$.
\end{itemize}
In summary,
one has:
\begin{equation*}
n\cdot H_{(2),(2);k}^\circ \big((n)\big) =
\delta_{k,1} \Big(\binom{n}{2} + 3\binom{n}{3} + 2\binom{n}{4}\Big)
+ \delta_{k,3} \Big( 3\binom{n}{3} + 4\binom{n}{4} \Big).
\end{equation*}
\end{Example}

\section{Application: Polynomiality of Generalized Dessin Counting}
\label{sec-gdessin}

In this section we apply the above main theorem
to the case of the generalized dessin counting,
and obtain the $\bN$-coefficient binomial polynomiality in this case.
We also present a cut-and-join type representation
for the generalized dessin partition function.

\subsection{Generalized Dessin Counting}

First
we briefly review the enumeration of Grothendieck's dessins d'enfants
and its generalization.

Let $X$ be a smooth algebraic curve of genus $g$ defined over $\bC$.
A famous result of Bely\u{\i} \cite{be} tells that
$X$ can be defined over the field of algebraic numbers $\overline\bQ$ if and only if
there exists a holomorphic map $f:X\to\bC\bP^1$ such that
$f$ is ramified only over three points $0,1,\infty \in \bC\bP^1$.
Here $(X,f)$ is called a Bely\u{\i} pair if such a branched covering $f$ exists.
Given a Bely\u{\i} pair $(X,f)$,
consider the preimage $f^{-1}([0,1])$ of the segment $[0,1]\subset \bC\bP^1$ in the curve $X$.
Denote by $d$ the degree of the branched covering $f$,
then the preimage $f^{-1}([0,1])$ is a ribbon graph of genus $g$ with $d$ edges,
whose vertices are bicolored such that adjacent vertices have different colors.
Here the vertices of this ribbon graph are actually the preimage of the two points $0$ and $1$,
and the vertices in $f^{-1}(0)$ and $f^{-1}(1)$ are distinguished by two colors.
In \cite{gr},
Grothendieck shows that isomorphism classes of Bely\u{\i} pairs $(X,f)$ are
in one-to-one correspondence with such connected bicolored ribbon graphs,
and these connected bicolored ribbon graphs are called Grothendieck's dessins d'enfants.

The enumeration of Grothendieck's dessins d'enfants has been discussed
by many authors in literatures of mathematical physics,
see e.g. \cite{ammn, ac, dmss, kz, kls, yz, zhou2, zhou3, zog}.
Let $\mu = (\mu_1,\cdots,\mu_m)$ be a partition of integer,
and consider the weighted counting of dessins:
\begin{equation*}
N_{k,l} (\mu) = \sum_{\Gamma\in \cD_{k,l;\mu}} \frac{1}{|\Aut(\Gamma)|},
\end{equation*}
where $k,l$ are two positive integers,
and $\cD_{k,l;\mu}$ is the set of dessins whose associated Bely\u{\i} pair $(X,f)$
is of degree $d=|\mu|$ and satisfies:
\begin{itemize}
\item[1)]
$|f^{-1}(0)|=k$, and $|f^{-1}(1)|=l$;
\item[2)]
The ramification type over $\infty \in \bC\bP^1$ is described by the partition $\mu$,
\end{itemize}
and $\Aut(\Gamma)$ denotes the group of automorphisms of the dessin $\Gamma$ that preserves each boundary component.
The above dessin counting number $N_{k,l} (\mu)$
is actually a summation of Hurwitz numbers which count the branched coverings of $\bC\bP^1$ ramified over three points,
such that the preimages of the first two points consist of $k$ and $l$ elements respectively,
and the ramification type over the third point is described by $\mu$.
In \cite{kz},
Kazarian and Zograf derived the Virasoro constraints, Eynard-Orantin topological recursion,
and a cut-and-join equation for this dessin counting problem using combinatorial properties of the dessins d'enfants,
and proved that the generating series of $N_{k,l} (\mu)$ is the logarithm of a tau-function
of the KP hierarchy.
The KP-affine coordinates and emergent geometry of this tau-function
has been discussed by Zhou in \cite{zhou2, zhou3}.

It is natural to generalize the above dessin counting problem to the case
where the number of branch points on $\bC\bP^1$ is not necessarily to be three.
Let $\mu = (\mu_1,\cdots,\mu_l)$ be a partition of integer,
and let $1\leq k_1,\cdots,k_r$ be a sequence of positive integers (where $r\geq 1$).
Consider the branched coverings $f:\Sigma_g \to \bC\bP^1$ from a connected Riemann surface to $\bC\bP^1$
ramified over $r+1$ points
$q_1,\cdots,q_r,q_{r+1} \in \bC\bP^1$,
such that the preimage $f^{-1}(q_i)$ consists of $k_i$ elements for $1\leq i\leq r$,
and the ramification type over $q_{r+1}$ is described by $\mu$.
We denote by
\be
N_{k_1,\cdots,k_r}^\circ (\mu) = \sum_{f: \text{ connected}} \frac{1}{|\Aut(f)|}
\ee
the weighted counting of all such branched coverings.
Similarly,
we denote by
\be
N_{k_1,\cdots,k_r}^\bullet (\mu) = \sum_{f} \frac{1}{|\Aut(f)|}
\ee
the weighted counting of branched coverings of this ramification type
where we allow the Riemann surface $\Sigma_g$ to be disconnected.
It is clear that $N_{k_1,\cdots,k_r}^\bullet (\mu) = N_{k_1,\cdots,k_r}^\circ (\mu) =0$
unless $k_i\leq|\mu|$ for every $i$.
In the special case $r=2$,
the weighted counting $N_{k_1,k_2}^\circ(\mu)$ recovers
the dessin counting $N_{k_1,k_2}(\mu)$ studied in \cite{kz}.

\subsection{Schur function expansion for the generalized dessin partition function}

This subsection and the next subsection
are devoted to the properties of the generating series of
the (disconnected) generalized dessin counting.
These properties will not be used in the discussions of the polynomiality,
but here we describe them for completeness.
In this subsection we represent the partition functions
in terms of Schur functions.

Consider the following generating series of the
generalized dessin counting:
\be
Z_{(r)}(s;v_1,\cdots,v_r;\bm p)
= \sum_{k_1,\cdots,k_r = 1}^\infty \sum_{\mu\in \cP}
\Big(\prod_{i=1}^r v_i^{k_i}\Big) s^{|\mu|} N_{k_1,\cdots,k_r}^\bullet (\mu) \cdot p_\mu,
\ee
where $s,v_1,\cdots,v_r$ and $\bm p = (p_1,p_2,\cdots)$ are some formal variables,
and
$p_\mu = p_{\mu_1} p_{\mu_2}\cdots p_{\mu_l}$
for a partition $\mu=(\mu_1,\mu_2,\cdots,\mu_l)$.
We call it the generalized dessin partition function.
For $r=2$,
the partition function $Z_{(2)}(s;v_1,v_2;\bm p)$ coincide with the dessin partition function
studied in \cite{kz}.
By the Burnside character formula \eqref{eq-Burnside} and \eqref{eq-conjclasssize} one has:
\begin{equation*}
N_{k_1,\cdots,k_r}^\bullet(\mu)
= \sum_{\substack{\mu^{(1)},\cdots,\mu^{(r)} \in \cP_d \\ \text{s.t. }l(\mu^{(i)}) = k_i}} \sum_{\eta\in \cP_d}
\Big(\frac{d!}{\dim V^\eta} \Big)^{r-1} \cdot \frac{ \chi^\eta (C)}{z_\mu }
\prod_{i=1}^{r} \frac{ \chi^\eta (C_i)}{z_{\mu^{(i)}}},
\end{equation*}
where $C_i\subset S_d$ (resp. $C\subset S_d$) is the conjugacy class whose cycle type is $\mu^{(i)}$ (resp. $\mu$).
And then by \eqref{eq-newton-schur} we have:
\begin{equation*}
Z_{(r)}(s;v_1,\cdots,v_r;\bm p)
= \sum_{d=0}^\infty
\sum_{ \eta,\mu^{(1)},\cdots,\mu^{(r)}\in \cP_d}
\Big(\frac{d!}{\dim V^\eta} \Big)^{r-1} \cdot
 s^{d}\cdot s_\eta (\bm t)
\prod_{i=1}^{r} \frac{ \chi^\eta (C_i) \cdot v_i^{l(\mu^{(i)})} }{z_{\mu^{(i)}}},
\end{equation*}
where $\bm t=(t_1,t_2,t_3,\cdots)$, and
$t_n = \frac{p_n}{n}$ for every $n\geq 1$.
For each fixed $i$ ($1\leq i\leq r$),
taking $p_k = v_i$ for every $k\geq 1$ in \eqref{eq-newton-schur} gives:
\begin{equation*}
\sum_{\mu^{(i)}\in \cP} \frac{ \chi^\eta (C_i) \cdot v_i^{l(\mu^{(i)})} }{z_{\mu^{(i)}}}
= s_\eta (\bm t)\big|_{p_1 = p_2 = p_3 =\cdots  = v_i}
= s_\eta (t_k = \frac{v_i}{k}),
\end{equation*}
and thus we have:
\be
\label{eq-Z-Schurexp-1}
 Z_{(r)}(s;v_1,\cdots,v_r;\bm p)
=
\sum_{ \eta\in \cP}
\frac{s^{|\eta|} \cdot (|\eta|!)^{r-1}}{(\dim V^\eta)^{r-1}}
 \cdot s_\eta (\bm t)
\prod_{i=1}^{r} s_\eta (t_k = \frac{v_i}{k}).
\ee
The following evaluation formula
is well-known in literatures  (see e.g. \cite[\S I.3. Example 4]{mac};
and we will give a new proof in Appendix \ref{app-pflem5.1}) based on the $W$-action on Schur functions \cite{ly}:
\begin{Lemma}
\label{lem-eval-schur}
Take $t_k = \frac{v}{k}$ for every $k\geq 1$,
then one has:
\be
s_\mu (t_k=\frac{v}{k}) =
\prod_{\square \in\mu} \frac{v+c(\square)}{h(\square)},
\ee
where $c(\square)$ and $h(\square)$ are the content and hook-length respectively of the box $\square \in\mu$
in the Young diagram associated with $\mu$.
\end{Lemma}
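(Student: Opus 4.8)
The plan is to prove the stated evaluation as an identity of polynomials in $v$. First I would observe that, since $s_\mu(\bm t)$ is weighted-homogeneous of degree $|\mu|$ for $\deg t_n = n$, the substitution $t_k = v/k$ turns $s_\mu$ into a polynomial in $v$ of degree exactly $|\mu|$, whose top coefficient is the coefficient of $t_1^{|\mu|}$ in $s_\mu(\bm t)$, namely $1/\prod_{\square\in\mu}h(\square)$ by the hook-length formula. As the right-hand side $\prod_{\square\in\mu}\frac{v+c(\square)}{h(\square)}$ is manifestly a degree-$|\mu|$ polynomial with the same leading coefficient, the task reduces to matching the two polynomials completely, i.e. to showing that $s_\mu(t_k=v/k)$ has a root of order $\#\{\square:c(\square)=-j\}$ at each integer $v=-j$.

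Second, I would dispose of the base case of hooks $\mu=(m|n)=(m+1,1^n)$ directly from the defining expansion $s_{(m|n)}=\sum_{j=0}^{n}(-1)^j h_{m+1+j}e_{n-j}$. Under the specialization $p_k=v$ one has the single-variable formulas $h_j(t_k=v/k)=\binom{v+j-1}{j}$ and $e_j(t_k=v/k)=\binom{v}{j}$, which follow from the generating functions $\sum_j h_j z^j=(1-z)^{-v}$ and $\sum_j e_j z^j=(1+z)^{v}$. A short binomial (Vandermonde-type) summation then collapses the alternating sum to $\frac{v(v+1)\cdots(v+m)\,(v-1)\cdots(v-n)}{(m+n+1)\,m!\,n!}$, which is exactly the product formula for the hook, whose contents are $\{0,1,\dots,m\}\cup\{-1,\dots,-n\}$ and whose hook lengths multiply to $(m+n+1)\,m!\,n!$.

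Third, for general $\mu$ I would bring in Theorem \ref{thm-LY-action} through its diagonal ($m=0$) case, which reads $P_0^{(k)}s_\mu(\bm t)=\bigl(\sum_{i=1}^{l}k[\mu_i-i]_{k-1}+[-l]_k\bigr)s_\mu(\bm t)$ and exhibits $s_\mu$ as a simultaneous eigenfunction of the commuting family $\{P_0^{(k)}\}_{k\geq1}$. The key computation is to rewrite these eigenvalues as symmetric functions of the contents $\{c(\square)\}$: using $\sum_{k\geq1}[a]_{k-1}\frac{u^{k-1}}{(k-1)!}=(1+u)^a$ and telescoping the boundary term $[-l]_k$ against the rows $\mu_i-i$, the eigenvalue generating series should collapse to $\sum_{\square\in\mu}(1+u)^{c(\square)}$ up to a universal correction, thereby identifying the content multiset as spectral data of $\mu$. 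I would then reduce the general case to hooks either through the Giambelli determinant $s_\mu=\det\bigl(s_{(m_i|n_j)}\bigr)$ or through an induction on $|\mu|$ in which the $m<0$ part of Theorem \ref{thm-LY-action} appends a hook and relates $s_\mu(t_k=v/k)$ to the specializations of smaller diagrams.

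The hard part will be the third step. Converting $\sum_i k[\mu_i-i]_{k-1}+[-l]_k$ into a clean content power sum is delicate because of the extra factor $k$ and the boundary term, and—more seriously—knowing the content multiset as an eigenvalue does not by itself force $s_\mu(t_k=v/k)$ to vanish at $v=-c(\square)$ with the right multiplicities. Upgrading this spectral information into the pointwise polynomial identity is the crux: concretely, I expect to control it by checking agreement at all positive integers $v=N$, where the left-hand side is the well-understood specialization $s_\mu(1^N)$, and by managing the off-diagonal cancellations in the Giambelli determinant (whose specialization is \emph{not} the product of the entries' specializations). This bookkeeping, organized by the operator action of Theorem \ref{thm-LY-action}, is where the real work lies.
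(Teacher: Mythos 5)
Your first two steps are sound: reducing the lemma to an identity of degree-$|\mu|$ polynomials in $v$ with common leading coefficient $1/\prod_{\square\in\mu}h(\square)$ is correct, and the hook case $\mu=(m|n)$ does collapse, via $h_j(t_k=v/k)=\binom{v+j-1}{j}$ and $e_j(t_k=v/k)=\binom{v}{j}$, to $\prod_{j=-n}^{m}(v+j)\big/\bigl((m+n+1)\,m!\,n!\bigr)$. But the proof is not complete: your third step is a list of candidate strategies (eigenvalues of $P_0^{(k)}$, the Giambelli determinant, induction on $|\mu|$, comparison at integer points), and you yourself concede that turning any of them into the pointwise identity ``is where the real work lies.'' The one route that would close the argument instantly --- both sides are polynomials in $v$ agreeing at every positive integer $N$ because $s_\mu(1^N)=\prod_{\square\in\mu}\frac{N+c(\square)}{h(\square)}$ --- begs the question: that hook content formula \emph{is} Lemma \ref{lem-eval-schur} evaluated at $v=N$; it is precisely the classical statement in Macdonald that the paper cites, and the whole point of the appendix is to reprove it independently by operator methods. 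So either you assume the result or you have not established it for non-hook shapes. The spectral route suffers from the obstruction you already identified (the eigenvalues of $P_0^{(k)}$ only see the content multiset, not the value of the specialization), and the Giambelli route requires evaluating a nontrivial determinant of rational functions in $v$, which you do not carry out.

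For contrast, the paper's proof avoids all case analysis by a generating-function trick using only the $m=\pm1$, $k=1,2$ instances of Theorem \ref{thm-LY-action}. One expands $e^{s(L_{-1}+v t_1)}(1)$ combinatorially: each application of $L_{-1}+v t_1$ adds one box $\square$ with weight $c(\square)+v$, so the coefficient of $s_\mu(\bm t)$ is $K_\mu\prod_{\square\in\mu}\bigl(c(\square)+v\bigr)/|\mu|!$, with $K_\mu$ the number of standard Young tableaux of shape $\mu$, i.e. $|\mu|!/\prod_{\square\in\mu}h(\square)$. Separately, one shows that $G(\bm t)=\sum_\mu s^{|\mu|}s_\mu(t_k=\tfrac{v}{k})\,s_\mu(\bm t)$ satisfies the same evolution equation $\partial_s G=(L_{-1}+v t_1)G$ with the same initial condition, by transferring the box-adding action to the box-removing operators $L_1+v\,\partial/\partial t_1$ acting inside the specialization and then invoking Euler's formula for the homogeneity of $s_\lambda$. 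Comparing coefficients of $s_\mu$ proves the lemma for all shapes at once, with no determinants and no appeal to the known evaluation of $s_\mu(1^N)$. If you want to salvage your outline, the realistic options are either to actually prove the hook content formula for $s_\mu(1^N)$ from scratch (e.g. by the bialternant or Weyl-dimension computation) and then use polynomiality, or to switch to a box-adding operator argument of the paper's type.
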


Now apply Lemma \ref{lem-eval-schur} to \eqref{eq-Z-Schurexp-1},
and we obtain the following:
\begin{equation*}
Z_{(r)}(s;v_1,\cdots,v_r;\bm p)=
\sum_{ \eta\in \cP}
\frac{s^{|\eta|} \cdot (|\eta|!)^{r-1}}{(\dim V^\eta)^{r-1}}
 \cdot s_\eta (\bm t)
\prod_{i=1}^{r} \prod_{\square \in \eta} \frac{v_i+c(\square)}{h(\square)}.
\end{equation*}
Recall that $\dim V^\eta$ is given by \eqref{eq-dim-hook},
and thus:
\be
\label{eq-Schurexp-1}
Z_{(r)}(s;v_1,\cdots,v_r;\bm p)=
\sum_{ \eta\in \cP} s^{|\eta|}
 \cdot
\Big( \prod_{\square \in \eta} \frac{ \prod_{i=1}^{r} \big(v_i+c(\square)\big)}{h(\square)}\Big)
\cdot s_\eta (\bm t).
\ee
One can further simplify this formula by using the following well-known evaluation formula
for Schur functions (see e.g. \cite[\S I.3. Example 5]{mac}):
\be
s_\eta (\delta_{k,1}) = \frac{1}{\prod_{\square \in \eta} h(\square)},
\ee
where $s_\eta (\delta_{k,1})$ is the evaluation of the Schur polynomial $s_\eta(\bm t)$
at $t_k = \delta_{k,1}$ for every $k\geq 1$.
In this way we can conclude that:
\begin{Proposition}
Take $t_n = \frac{p_n}{n}$,
then there is a Schur function expansion
for the generalized dessin partition function:
\be
\label{eq-Schurexp-Zr}
Z_{(r)}(s;v_1,\cdots,v_r;\bm p)=
\sum_{ \eta\in \cP} s^{|\eta|}\cdot
\Big( \prod_{\square \in \eta} \prod_{i=1}^{r} \big(v_i+c(\square)\big)\Big)
\cdot s_\eta(\delta_{k,1}) s_\eta (\bm t).
\ee
\end{Proposition}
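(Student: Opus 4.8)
The plan is to collapse the definition of $Z_{(r)}$ into a single sum over partitions $\eta$ by inserting the Burnside formula and then recognizing the resulting character sums as specializations of Schur functions. First I would substitute the Burnside character formula \eqref{eq-Burnside} for $N_{k_1,\cdots,k_r}^\bullet(\mu)$, clearing the class sizes via $|C_i| = d!/z_{\mu^{(i)}}$ and $|C| = d!/z_\mu$. The branched covering has $r+1$ branch points, so grouping the $r+1$ character factors against the $(\dim V^\eta)^2$ in the denominator of \eqref{eq-Burnside} produces the net weight $(d!/\dim V^\eta)^{r-1}$ attached to each $\eta$, together with one factor $\chi^\eta(C)/z_\mu$ (for the profile $\mu$ over $q_{r+1}$) and $r$ factors $\chi^\eta(C_i)/z_{\mu^{(i)}}$.

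Next I would feed these character sums into the Newton-to-Schur change of basis \eqref{eq-newton-schur}. Summing $\chi^\eta(C)/z_\mu$ against $p_\mu$ over all $\mu$ reproduces exactly $s_\eta(\bm t)$. For each $i$, the crucial observation is that weighting $\chi^\eta(C_i)/z_{\mu^{(i)}}$ by $v_i^{l(\mu^{(i)})}$ and summing over $\mu^{(i)}$ is the same as specializing $p_k = v_i$ for all $k\geq 1$ in \eqref{eq-newton-schur}, so it equals the evaluation $s_\eta(t_k = v_i/k)$. This is the one genuinely clever step, since it collapses the entire sum over each intermediate ramification profile into a single content-type specialization; everything else is bookkeeping. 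Carrying this out yields \eqref{eq-Z-Schurexp-1}.

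To make the coefficients explicit I would then apply Lemma \ref{lem-eval-schur}, rewriting each $s_\eta(t_k = v_i/k)$ as $\prod_{\square\in\eta}(v_i+c(\square))/h(\square)$, and invoke the hook-length formula \eqref{eq-dim-hook} to replace $d!/\dim V^\eta$ by $\prod_{\square\in\eta} h(\square)$. The main point requiring care is the bookkeeping of hook-length powers: the factor $(d!/\dim V^\eta)^{r-1}$ contributes $\big(\prod_{\square\in\eta} h(\square)\big)^{r-1}$, while the $r$ content evaluations each contribute a denominator $\prod_{\square\in\eta} h(\square)$, so the net power is a single $\big(\prod_{\square\in\eta} h(\square)\big)^{-1}$. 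This produces \eqref{eq-Schurexp-1}.

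Finally I would recognize the surviving factor $\prod_{\square\in\eta} 1/h(\square)$ as the hook-length evaluation $s_\eta(\delta_{k,1})$ quoted just above, and substitute to obtain \eqref{eq-Schurexp-Zr}. I do not expect any serious obstacle: granting Lemma \ref{lem-eval-schur} and the two standard specialization identities, the argument is a careful but essentially routine manipulation of the Burnside sum, and the only place where one must stay alert is the counting of hook-length factors described above, which is exactly what converts the naive weight $\prod_{\square}\prod_i (v_i+c(\square))/h(\square)$ into the clean product of $s_\eta(\delta_{k,1})$ with the content factors.
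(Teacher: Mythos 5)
Your proposal is correct and follows essentially the same route as the paper: insert the Burnside formula, use the Newton-to-Schur change of basis \eqref{eq-newton-schur} both for the $p_\mu$-sum and (via the specialization $p_k=v_i$) for each intermediate ramification profile, then apply Lemma \ref{lem-eval-schur}, the hook-length formula \eqref{eq-dim-hook}, and the evaluation $s_\eta(\delta_{k,1})=1/\prod_{\square\in\eta}h(\square)$, with the same net hook-length bookkeeping. The only quibble is cosmetic: the $(\dim V^\eta)^2$ in \eqref{eq-Burnside} sits in the numerator of the prefactor, not the denominator, but your resulting weight $(d!/\dim V^\eta)^{r-1}$ is exactly right.
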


\begin{Remark}
In the special case $r=2$,
the above result recovers the Schur function expansion
for the dessin partition function
(cf. \cite[(5)]{zhou2}):
\be
Z_{\text{dessin}} = \sum_{\mu\in \cP} s_\mu \cdot
\prod_{\square\in \mu} \frac{s\cdot \big(u+c(\square)\big)\big(v+c(\square)\big)}{h(\square)},
\ee
where $u=v_1$ and $v=v_2$.
\end{Remark}

\subsection{Cut-and-join representation for the generalized dessin partition function}

In this subsection we derive a cut-and-join type representation
for the generalized dessin partition functions.

First we need to recall the actions of $W$-type operators on Schur functions
derived by X. Liu and the second-named author in \cite{ly}.
Let $k\in \bZ_+$, and denote:
\be
\label{eq-def-Pmk-1}
P^{(k)} (z) =
:\big(\pd_z +J(z)\big)^{k-1} J(z):,
\ee
where $:\quad:$ is the normal ordering of bosonic operators (see e.g. \cite[\S 5]{djm} for the definition of the normal ordering)
and $J(z)$ is the following operator:
\be
\label{eq-def-Pmk-2}
J(z) = \sum_{n\in \bZ_+}  n t_n \cdot z^{n-1}
+\sum_{n\in \bZ_+} \frac{\pd}{\pd t_n} z^{-n-1}.
\ee
Define the operators $\{P_m^{(k)}\}_{m\in \bZ}$ by
\be
\label{eq-def-Pmk-3}
P^{(k)}(z) = \sum_{m\in \bZ} P_m^{(k)}\cdot z^{-m-k},
\ee
then the actions of these operators on Schur functions are given by:
\begin{Theorem}
[\cite{ly}]
\label{thm-LY-action}
Let $\mu = (\mu_1,\cdots,\mu_l)$ be a partition of integer.
Then for every $k\in \bZ_+$ and $m\in \bZ$,
one has:
\be
\label{eq-thm-LY-action}
\begin{split}
P_m^{(k)} s_\mu (\bm t)
=& \sum_{i=1}^{l} k[\mu_i-m-i]_{k-1} \cdot s_{\mu-m\epsilon_i} (\bm t)
+ \delta_{m,0} [-l]_k \cdot s_\mu(\bm t)  \\
& + \delta_{m<0}\cdot \sum_{n=1}^{-m} (-1)^{m-n}
k[n-l-1]_{k-1} \cdot s_{(\mu,n,1^{-m-n})}(\bm t) ,
\end{split}
\ee
where  $\mu-m\epsilon_i = (\mu_1,\cdots,\mu_{i-1}, \mu_i-m, \mu_{i+1},\cdots,\mu_l)$
(in the case where this sequence is no longer a partition,
see \S 2 in \cite{ly} for more details),
and
$[a]_k =
\prod_{j=0}^{k-1} (a-j)$
for an integer $a$.
Here we use the convention $[a]_0 =1$.
\end{Theorem}

Our main result in this subsection is the following:
\begin{Theorem}
\label{thm-cutandjoin-Z}
Let $a_1(\bm v),\cdots,a_{r+1}(\bm v)$ be the polynomials in $\bm v=(v_1,\cdots,v_r)$
determined by:
\be
\label{eq-def-akv}
\sum_{k=1}^{r+1} ka_k(\bm v) \cdot \prod_{j=0}^{k-2}(x-j)
= \prod_{i=1}^r (x+ v_i).
\ee
Then there is a cut-and-join type representation
for the generalized dessin partition function:
\be
\label{eq-cutandjoinrep-Z}
Z_{(r)}(s;v_1,\cdots,v_r;\bm p)
= e^{s\cdot \sum_{k=1}^{r+1} a_k(\bm v) P_{-1}^{(k)}} (1),
\ee
where $P_{-1}^{(k)}$ are the W-type operators defined by \eqref{eq-def-Pmk-1}-\eqref{eq-def-Pmk-3}.
\end{Theorem}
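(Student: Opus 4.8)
The plan is to combine the Schur-function expansion \eqref{eq-Schurexp-Zr} with the explicit action of the $W$-type operators from Theorem \ref{thm-LY-action}, thereby reducing \eqref{eq-cutandjoinrep-Z} to a purely combinatorial statement about building up Young diagrams box by box. First I would specialize Theorem \ref{thm-LY-action} to $m=-1$. Writing $\mu=(\mu_1,\cdots,\mu_l)$, the middle ($\delta_{m,0}$) term drops out and only $n=1$ survives in the last sum, giving
\be
P_{-1}^{(k)} s_\mu(\bm t)
= \sum_{i=1}^l k[\mu_i+1-i]_{k-1}\, s_{\mu+\epsilon_i}(\bm t)
+ k[-l]_{k-1}\, s_{(\mu,1)}(\bm t).
\ee
The key observation is that each summand adds a single box to the Young diagram of $\mu$: adding a box in row $i$ produces a box of content $c(\square)=\mu_i+1-i$, while $s_{(\mu,1)}$ adjoins a box in the new row $l+1$ of content $-l$, and in both cases the coefficient is exactly $k[c(\square)]_{k-1}$. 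When the box in row $i$ is not addable, i.e. $\mu_{i-1}=\mu_i$, the index $\mu+\epsilon_i$ is not a partition and the two shifted parts coincide ($\mu_{i-1}-(i-1)=(\mu_i+1)-i$), so $s_{\mu+\epsilon_i}=0$; hence only addable boxes contribute. I would record this uniformly as $P_{-1}^{(k)} s_\mu = \sum_{\nu=\mu+\square} k[c(\square)]_{k-1}\, s_\nu$, the sum running over all partitions $\nu$ obtained from $\mu$ by adjoining one addable box $\square$.

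Next I would assemble the operator $\cD := \sum_{k=1}^{r+1} a_k(\bm v) P_{-1}^{(k)}$ and invoke the defining relation \eqref{eq-def-akv}. Since $[c]_{k-1}=\prod_{j=0}^{k-2}(c-j)$, the total box coefficient is $\sum_{k=1}^{r+1} k a_k(\bm v)\prod_{j=0}^{k-2}(c(\square)-j)$, which by \eqref{eq-def-akv} evaluated at $x=c(\square)$ equals $\prod_{i=1}^r (v_i+c(\square))$. Thus $\cD$ acts as a single weighted box-adding operator,
\be
\cD\, s_\mu = \sum_{\nu=\mu+\square}\Big(\prod_{i=1}^r \big(v_i+c(\square)\big)\Big) s_\nu .
\ee

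Finally I would expand $e^{s\cD}(1)=\sum_{N\geq 0}\frac{s^N}{N!}\,\cD^N s_{(\emptyset)}$. Iterating the box-adding rule, $\cD^N s_{(\emptyset)}$ is a sum over chains $\emptyset=\mu^{(0)}\subset\mu^{(1)}\subset\cdots\subset\mu^{(N)}=\eta$ of partitions differing by one box, weighted by $\prod_{a=1}^N\prod_{i=1}^r(v_i+c(\square_a))$. The crucial simplification is that the content of a box depends only on its position in $\eta$, not on the step at which it was added, so this weight equals $\prod_{\square\in\eta}\prod_{i=1}^r(v_i+c(\square))$ for \emph{every} chain ending at $\eta$. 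The number of such chains is the number of standard Young tableaux of shape $\eta$, namely $\dim V^\eta = N!/\prod_{\square\in\eta}h(\square)=N!\cdot s_\eta(\delta_{k,1})$. Substituting, the factors of $N!$ cancel and
\be
e^{s\cD}(1)=\sum_{\eta\in\cP} s^{|\eta|}\Big(\prod_{\square\in\eta}\prod_{i=1}^r\big(v_i+c(\square)\big)\Big) s_\eta(\delta_{k,1})\, s_\eta(\bm t),
\ee
which is precisely \eqref{eq-Schurexp-Zr}, completing the proof.

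I expect the main obstacle to be the first step: verifying that the two structurally different families of terms in Theorem \ref{thm-LY-action} at $m=-1$ (interior-row additions and the new-row addition) unify into a single content-weighted box-adding rule, and in particular that non-addable indices contribute nothing because the corresponding Schur functions vanish. Once this is established, the remainder is a bookkeeping exercise using the defining relation for $a_k(\bm v)$ together with the standard Young tableaux interpretation and the hook-length formula.
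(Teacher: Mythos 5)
Your proposal is correct and follows essentially the same route as the paper's own proof: specialize Theorem \ref{thm-LY-action} at $m=-1$ to turn $P_{-1}^{(k)}$ into a content-weighted box-adding operator, use the defining relation \eqref{eq-def-akv} at $x=c(\square)$ to collapse $\sum_k a_k(\bm v)P_{-1}^{(k)}$ into a single operator with weight $\prod_i(v_i+c(\square))$, expand the exponential as a sum over chains counted by standard Young tableaux, and conclude via the hook-length formula and the Schur expansion \eqref{eq-Schurexp-Zr}. The only difference is that you spell out two details the paper leaves implicit (that only $n=1$ survives in the last sum of \eqref{eq-thm-LY-action} when $m=-1$, and that non-addable indices give vanishing Schur functions), which is a welcome but minor refinement.
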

\begin{proof}
Take $m=-1$ in Theorem \ref{thm-LY-action},
and we obtain:
\be
\label{eq-P-1-k}
P_{-1}^{(k)} s_\mu (\bm t)
= \sum_{i=1}^{l(\mu)} k[\mu_i +1 -i]_{k-1} \cdot s_{\mu+\epsilon_i}(\bm t)
 + k[-l(\mu)]_{k-1} \cdot s_{(\mu,1)} (\bm t).
\ee
The right-hand side is actually a summation over
all possible ways of adding a box on the right or at the bottom of the
Young diagram associated with $\mu$
such that the resulting graph still represents a partition.
We denote by $\square$ this new box,
and denote by $\mu+\square$ the resulting partition.
If the new box $\square$ is in the $i$-th row where $1\leq i\leq l(\mu)$,
then its content is $c(\square) = \mu_i +1 -i$;
and if $\square$ is in the $(l(\mu)+1)$-th row,
its content is $c(\square) = -l(\mu)$.
Thus \eqref{eq-P-1-k} becomes:
\begin{equation*}
\begin{split}
P_{-1}^{(k)} s_\mu (\bm t)
= \sum_{ \mu + \square \in \cP}
k[c(\square)]_{k-1} \cdot s_{\mu+\square}(\bm t)
= \sum_{ \mu + \square \in \cP}
k \cdot \prod_{j=0}^{k-2} \big( c(\square)-j \big) \cdot s_{\mu+\square}(\bm t).
\end{split}
\end{equation*}
Then we have:
\begin{equation*}
\begin{split}
\Big( \sum_{k=1}^{r+1} a_k(\bm v) P_{-1}^{(k)} \Big) s_\mu
=& \sum_{k=1}^{r+1} a_k(\bm v) \sum_{\mu+\square \in \cP}
k\prod_{j=0}^{k-2} \big( c(\square) -j \big) s_{\mu+\square} (\bm t) \\
=& \sum_{\mu+\square \in \cP}
\Big( \prod_{i=1}^r \big( c(\square) + v_i \big) \Big)
\cdot s_{\mu+\square} (\bm t).
\end{split}
\end{equation*}
Notice that $s_{(\emptyset)} (\bm t) = 1$,
and thus:
\begin{equation*}
\begin{split}
e^{s\cdot \sum_{k=1}^{r+1} a_k(\bm v) P_{-1}^{(k)}} (1)
= &\sum_{n=0}^\infty \frac{s^n}{n!}\cdot
\Big( \sum_{k=1}^{r+1} a_k(\bm v) P_{-1}^{(k)} \Big)^n \big( s_{(\emptyset)} (\bm t) \big) \\
=& \sum_{\mu \in \cP} \frac{s^{|\mu|}}{|\mu|!} \cdot
K_\mu \cdot \prod_{\square \in \mu}
\prod_{i=1}^r \big( c(\square)+v_i\big)s_\mu(\bm t),
\end{split}
\end{equation*}
where $K_\mu$ is the number of all possible ways to add $n$ boxes successively to $(\emptyset)$ to obtain
the Young diagram of $\mu$,
i.e.,
the number of standard Young tableaux of shape $\mu$.
The number of standard Young tableaux of shape $\mu$ is known to be equal to the dimension
of the irreducible representation $V^\mu$ (see e.g. \cite[\S 4.3]{fh}):
\be
\label{eq-numberofYT}
K_\mu = \dim V^\mu =  \frac{|\mu|!}{\prod_{\square \in \mu} h(\square)},
\ee
And thus:
\begin{equation*}
\begin{split}
e^{s\cdot \sum_{k=1}^{r+1} a_k(\bm v) P_{-1}^{(k)}} (1)
=& \sum_{\mu \in \cP} s^{|\mu|} \cdot
\prod_{\square \in \mu}
\frac{ \prod_{i=1}^r \big( c(\square)+v_i\big) }{h(\square)}
s_\mu(\bm t).
\end{split}
\end{equation*}
Then \eqref{eq-cutandjoinrep-Z} follows from this equality and \eqref{eq-Schurexp-1}.
\end{proof}

It is well-known that the operators $P_m^{(k)}$
belong to the algebra $W_{1+\infty}$ which describes the symmetry of the KP hierarchy,
see e.g. \cite{fkn, ly}.
Therefore
the partition function
\begin{equation*}
Z_{(r)}(s;v_1,\cdots,v_r;\bm p)
= e^{s\cdot \sum_{k=1}^{r+1} a_k(\bm v) P_{-1}^{(k)}} (1)
\end{equation*}
is a tau-function of the KP hierarchy
with time variables $\bm t = (t_1,t_2,\cdots)$
where $t_n = \frac{p_n}{n}$.
As a straightforward consequence of the above theorem,
we have the following:
\begin{Corollary}
\label{cor-cutjoineq}
The following cut-and-join type equation holds:
\be
\frac{\pd}{\pd s} Z_{(r)}(s;v_1,\cdots,v_r;\bm p) =
\sum_{k=1}^{r+1} a_k(\bm v) \cdot P_{-1}^{(k)} Z_{(r)}(s;v_1,\cdots,v_r;\bm p).
\ee
\end{Corollary}

\begin{Example}
We give some examples of the above cut-and-join representation
for small $r$.
In the simplest case $r=1$,
Theorem \ref{thm-cutandjoin-Z} gives:
\begin{equation*}
Z_{(1)} (s;v_1;\bm p)
= \exp \Big[s\cdot \Big( v_1 P_{-1}^{(1)}
+ \half P_{-1}^{(2)}
\Big)\Big]\cdot 1,
\end{equation*}
where $t_n = \frac{p_n}{n}$ and
$P_{-1}^{(1)} = t_1$,
$P_{-1}^{(2)} = \sum_{n\geq 1} 2(n+1) t_{n+1} \frac{\pd}{\pd t_n}$.
For $r=2$,
the above theorem recovers the cut-and-join representation for the
dessin partition function in \cite{kz}:
\be
Z_{\text{dessin}} (s;v_1,v_2;\bm p)
= \exp \Big[s\cdot \Big( v_1v_2 P_{-1}^{(1)}
+ \frac{v_1+v_2+1}{2} P_{-1}^{(2)}
+ \frac{1}{3} P_{-1}^{(3)}
\Big)\Big]\cdot 1,
\ee
where $P_{-1}^{(3)} =
3 \sum_{k,l\geq 1} \Big(klt_kt_l \frac{\pd}{\pd t_{k+l-1}}
+ (k+l+1)t_{k+l+1}\frac{\pd}{\pd t_k}\frac{\pd}{\pd t_l} \Big)
-3\sum_{k\geq 1} (k+1)t_{k+1}\frac{\pd}{\pd t_k}$.
And for $r=3$,
the cut-and-join representation matches with the result given in \cite[\S 8.4]{ammn}:
\begin{equation*}
Z_{(3)} (s;\bm v;\bm p)
= \exp\Big[s \Big( e_3(\bm v) P_{-1}^{(1)}
+ \frac{e_2(\bm v)+e_1(\bm v)+1}{2} P_{-1}^{(2)}
+ \frac{e_1(\bm v)+3}{3} P_{-1}^{(3)}
+ \frac{1}{4} P_{-1}^{(4)}
\Big)\Big]\cdot 1,
\end{equation*}
where we use $e_k(\bm v)$ to denote the elementary symmetric function
of degree $k$ in $\bm v =(v_1,\cdots,v_r)$.
For $r=4$,
the result is:
\begin{equation*}
\begin{split}
&Z_{(4)} (s;\bm v;\bm p)
= \exp\Big[ s \Big(
e_4(\bm v) P_{-1}^{(1)} + \half \big(e_0(\bm v) + e_1(\bm v)+e_2(\bm v)+e_3(\bm v) \big) P_{-1}^{(2)} \\
&\quad
 + \frac{1}{3}\big( 7e_0(\bm v) + 3e_1(\bm v) +e_2(\bm v) \big)P_{-1}^{(3)}
+\frac{1}{4}\big(6e_0(\bm v) +e_1(\bm v) \big)P_{-1}^{(4)}
+ \frac{e_0(\bm v)}{5} P_{-1}^{(5)}
\Big) \Big] \cdot 1,
\end{split}
\end{equation*}
\end{Example}

At the end of this subsection,
we give an explicit formula for the coefficients $a_k(\bm v)$ in Theorem \ref{thm-cutandjoin-Z}
and Corollary \ref{cor-cutjoineq}
(see Appendix \ref{app-pflem5.2} for a proof):
\begin{Lemma}
\label{lem-ak-explicit}
Let $r\geq 1$.
Then the functions $\{a_k(\bm v)\}_{k=1}^r$ are given by:
\be
a_k (\bm v) =
\frac{(-1)^{k-1}}{k!}e_r(\bm v)
+\frac{1}{k}
\sum_{j=0}^r \Big( \sum_{i=1}^{k-1} \frac{(-1)^{i+k-1} \cdot i^{r-j}}{i!\cdot (k-1-i)!} \Big)  e_j(\bm v),
\ee
where $e_j(\bm v)$ is the elementary symmetric polynomial of degree $j$ in $(v_1,\cdots,v_r)$,
and $e_0 (\bm v)= 1$.
\end{Lemma}

\subsection{$\bN$-coefficient binomial polynomiality for the generalized dessin counting}

As a corollary of Theorem \ref{thm-binompol-H},
we have the following:
\begin{Theorem}
\label{thm-pol-gdessin}
Fix some positive integers $l>0$, $r>1$ and $k_1,k_2,\cdots,k_r>0$.
Then for a partition of integer $\mu = (\mu_1, \mu_2, \cdots, \mu_l)$ (with $|\mu|>k_i$ for every $i$),
the number
\be
z_\mu \cdot N_{|\mu|-k_1,\cdots,|\mu|-k_{r-1},k_r}^\circ (\mu)
\ee
is a linear combination of
\be
\label{eq-binomterm-N}
\binom{\mu_1}{d_1} \binom{\mu_2}{d_2} \cdots \binom{\mu_l}{d_l}
\ee
with non-negative integer coefficients,
where $d_j\geq 1$ for every $1\leq j\leq l$ and
\be
\sum_{j=1}^l d_j \leq 2\sum_{i=1}^{r-1} k_i.
\ee
\end{Theorem}
\begin{proof}
For a partition $\lambda$,
we say $\lambda$ is minimal if it does not contain $1$,
i.e., if $m_1(\lambda) = 0$.
Notice that for arbitrary partitions $\lambda^1,\cdots,\lambda^r$
and non-negative integers $a_1,\cdots,a_r$,
one always has:
\begin{equation*}
H_{\lambda^1,\cdots,\lambda^r;k}^\circ (\mu)
= H_{(\lambda^1,1^{a_1}),\cdots,(\lambda^r,1^{a_r});k}^\circ (\mu)
\end{equation*}
for every $\mu$ satisfying $|\mu|\geq |\lambda^i|+a_i$,
where $H_{\lambda^1,\cdots,\lambda^r;k}^\circ (\mu)$ is
the Hurwitz numbers discussed in \S \ref{sec-Npol-H-pf}.
Then by the definition of $H_{\lambda^1,\cdots,\lambda^r;k}^\circ (\mu)$,
we have:
\be
\label{eq-pf-dessinpol}
N_{|\mu|-k_1,\cdots,|\mu|-k_{r-1},k_r}^\circ (\mu)
= \sum_{\substack{\lambda^1,\cdots,\lambda^{r-1}\in \cP^{\text{min}}\\ l(\tilde\lambda^i) = |\mu|-k_i}}
H_{\lambda^1,\cdots,\lambda^{r-1};k_r}^\circ (\mu),
\ee
where $\cP^{\text{min}}$ is the set of all minimal partitions,
and $\tilde\lambda^i$ is the partition $(\lambda^i,1^{|\mu|-|\lambda^i|})$.
Here the condition $l(\tilde\lambda^i) = |\mu|-k_i$ is equivalent to:
\begin{equation*}
l(\lambda) + |\mu| - |\lambda| = |\mu|-k_i,
\end{equation*}
i.e., $|\lambda| - l(\lambda) = k_i$.
Notice that for a fixed $k_i$,
there is only a finite number of minimal partitions $\lambda^i$ satisfying this condition,
and thus the summation in the right-hand side of \eqref{eq-pf-dessinpol} is a finite sum.
Then the conclusion follows from \eqref{eq-pf-dessinpol} and Theorem \ref{thm-binompol-H}.
\end{proof}

\begin{Remark}
\label{rmk-pol-gdessin-coeff}
One can also write down a proof straightforwardly using the same method in \S \ref{sec-Npol-H-pf}.
In fact,
denote $n = |\mu|$,
and let $\gamma_0 \in S_n$ and $\Gamma_s \subset [n]$
be defined by \eqref{eq-def-gamma0} and \eqref{eq-def-Gammas} respectively.
For every $1\leq s \leq l$,
fix $d_s$ distinct numbers
$b_{s,1},b_{s,2},\cdots, b_{s,d_s} \in \Gamma_s$,
and then the coefficient of \eqref{eq-binomterm-N} is the size of the following set:
\be
\label{eq-gdessin-coeff}
\Bigg\{ (\alpha_1,\cdots,\alpha_{r-1},\beta) \in  (S_n)^r \Bigg|
\begin{array}{l}
l(\alpha_i) =n-k_i,
\text{ } l(\beta) = k_r,
\text{ } \alpha_1\cdots\alpha_{r-1}\beta\gamma_0 = e, \\
\text{$\langle \alpha_1,\cdots,\alpha_{r-1},\beta,\gamma_0 \rangle$ acts transitively on $[n]$,} \\
\text{and $F\cap \Gamma_s = \bm b_s$ for every $s$}
\end{array}
\Bigg\}
\ee
which is actually independent of $\mu$ and the choice of the numbers $b_{s,j}$,
where $F\subset [n]$ is the subset of elements which are not fixed by at least one of
$\alpha_1,\cdots,\alpha_{r-1}$,
and $l(\alpha_i)$ is number of cycles in $\alpha_i \in S_n$.
\end{Remark}

\begin{Corollary}
Fix some positive integers $l>0$, $r>1$ and $k_1,k_2,\cdots,k_r>0$.
Then for a partition of integer $\mu = (\mu_1, \mu_2, \cdots, \mu_l)$ (with $|\mu|>k_i$ for every $i$),
the number
\be
z_\mu \cdot N_{|\mu|-k_1,\cdots,|\mu|-k_{r-1},k_r}^\circ (\mu)
\ee
is a polynomial with rational coefficients in $\mu_1,\cdots,\mu_l$
of degree less than or equals to $2\sum_{i=1}^{r-1} k_i$.
\end{Corollary}

\subsection{Examples}

Now we give some examples of the generalized dessin counting.

\begin{Example}
Consider the simplest case $l(\mu) = 1$,
and denote $\mu = (n)$.
For $r=2$,
the coefficient of $\binom{n}{d}$ in $n\cdot N_{n-k_1,k_2}^\circ \big( (n) \big)$
is the size of the set:
\begin{equation*}
\Bigg\{ (\alpha,\beta) \in  (S_n)^2 \Bigg|
\begin{array}{l}
l(\alpha) =n-k_1,
\text{ } l(\beta) = k_2,
\text{ } \alpha\beta (1,2,\cdots, n) = e, \\
\text{$\langle \alpha,\beta, (1,2,\cdots, n) \rangle$ acts transitively on $[n]$,} \\
\text{and $F = \{1,2,\cdots,d\}$ for every $s$}
\end{array}
\Bigg\},
\end{equation*}
where $F\subset [n]$ is the subset of elements which are not fixed by $\alpha$.

First for $k_1=k_2=1$,
one has
\begin{equation*}
nN_{n-1,1}^\circ \big( (n) \big) =0
\end{equation*}
since in this case $l(\alpha)=n-1$ implies $\alpha$ is a transposition
and $l(\beta) = 1$ implies $\beta$ is a $n$-cycle,
and thus $\alpha\beta (1,2,\cdots, n)$ cannot be $e$ since it is an odd permutation.

And for $k_1=1$ and $k_2=2$,
the coefficient of $\binom{n}{d}$ is:
\begin{equation*}
\Big|
\big\{ \alpha \in S_n \big|
l(\alpha) =n-1,
\text{ } l\big(\alpha^{-1}\cdot (n,n-1,\cdots,1)\big) = 2,
\text{ } F = \{1,2,\cdots,d\}
\big\}
\Big|.
\end{equation*}
Here $l(\alpha) =n-1$ implies that $\alpha$ is a transposition,
and thus $d=2$ and $\alpha = (12)$.
One easily checks that $l\big((12)\cdot (n,n-1,\cdots,1)\big) = 2$ indeed holds,
and thus
\begin{equation*}
nN_{n-1,2}^\circ \big( (n) \big) = \binom{n}{2} = \frac{n(n-1)}{2}.
\end{equation*}

For $k_1=2$ and $k_2=1$,
the coefficient of $\binom{n}{d}$ is:
\begin{equation*}
\Big|
\big\{ \alpha \in S_n \big|
l(\alpha) =n-2,
\text{ } l\big(\alpha^{-1}\cdot (n,n-1,\cdots,1)\big) = 1,
\text{ } F = \{1,2,\cdots,d\}
\big\}
\Big|.
\end{equation*}
Here the cycle type of $\alpha$ must be $(2,2,1,\cdots,1)$ or $(3,1,\cdots,1)$.
If $\alpha$ is of type $(2,2,1,\cdots,1)$,
then $d=4$ and $\alpha=(12)(34)$, $(13)(24)$, or $(14)(23)$.
One checks that $\alpha=(13)(24)$ is the only one satisfying
$l\big(\alpha^{-1}\cdot (n,n-1,\cdots,1)\big) = 1$.
Similarly if $\alpha$ is of type $(3,1,\cdots,1)$,
then $d=3$ and $\alpha=(123)$ or $(132)$,
and $l\big(\alpha^{-1}\cdot (n,n-1,\cdots,1)\big) = 1$ tells that $\alpha$ must be $(132)$.
We conclude that:
\begin{equation*}
nN_{n-2,1}^\circ \big( (n) \big) = \binom{n}{3} + \binom{n}{4}
= \frac{1}{24}(n-2)(n-1)n(n+1).
\end{equation*}
\end{Example}

\begin{Example}
The following are some examples for $l=1$ and $r=3$.
The simplest case $k_1=k_2=k_3 = 1$ has been computed in Example \ref{eg-3.3pol},
and the result is:
\begin{equation*}
nN_{n-1,n-1,1}^\circ \big((n)\big) =
2\binom{n}{4} + 3\binom{n}{3} + \binom{n}{2} = \frac{1}{12} n^2(n-1)(n+1).
\end{equation*}

By the Riemann-Hurwitz formula \eqref{eq-RiemHur} one has:
\begin{equation*}
nN_{n-1,n-1,2}^\circ \big((n)\big) =
nN_{n-1,n-2,1}^\circ \big((n)\big) =
nN_{n-2,n-1,1}^\circ \big((n)\big) =0.
\end{equation*}

Now consider $nN_{n-1,n-2,2}^\circ \big((n)\big)$.
In this case the degree of the polynomial is not greater than
$2(1+2) = 6$.
The coefficient of $\binom{n}{6}$ is the number of the pairs $(\alpha_1,\alpha_2)\in (S_n)^2$
such that
\be
\label{eq-eg-122-cond}
l(\alpha_1)=n-1,\qquad
l(\alpha_2)=n-2,\qquad
l\big(\alpha_2^{-1}\alpha_1^{-1}\cdot (n,n-1,\cdots,1) \big) =2,
\ee
and the set of elements not fixed by $\alpha_1$ or $\alpha_2$ is $\{1,2,\cdots,6\}$.
In this case $\alpha_1$ is a transposition $(ab)$,
and $\alpha_2$ must be of the form $(cd) (ef)$ such that
$\{a,b,c,d,e,f\} = \{1,2,\cdots,6\}$.
A simple calculation tells that \eqref{eq-eg-122-cond} implies:
\begin{equation*}
\begin{split}
(ab)(cd)(ef) =& (12)(35)(46),\quad
(23)(46)(51),\quad
(34)(51)(62),\quad
(45)(62)(13),\\
& (56)(13)(24),\quad
(61)(24)(35),\quad
(14)(25)(36),\quad
(14)(26)(35),\\
& (25)(13)(46),\quad
(36)(24)(51),
\end{split}
\end{equation*}
and there are $\binom{3}{1}\cdot 10 = 30$ choices of such a pair $(\alpha_1,\alpha_2)\in (S_n)^2$.
Therefore the leading term of $nN_{n-1,n-2,2}^\circ \big((n)\big)$
is $30\binom{n}{6}$.
Similarly one can compute other terms and finally obtain (here we omit the details):
\begin{equation*}
nN_{n-1,n-2,2}^\circ \big((n)\big)
= 30\binom{n}{6} + 60 \binom{n}{5} +36 \binom{n}{4} +6\binom{n}{3}
= \frac{1}{24} (n-2)(n-1)^2 n^2 (n+1) ,
\end{equation*}
\end{Example}

\begin{Example}
For $l=2$ and $r=2$ we have
(here we omit the details of computations):
\begin{equation*}
\begin{split}
 z_{(n_1,n_2)} \cdot N_{n_1+n_2-1,1}^\circ \big( (n_1,n_2)\big)
=& \binom{n_1}{1} \binom{n_2}{1} = n_1n_2, \\
 z_{(n_1,n_2)} \cdot N_{n_1+n_2-2,2}^\circ \big( (n_1,n_2)\big)
=& 3\binom{n_1}{3}\binom{n_2}{1} + 3\binom{n_1}{1}\binom{n_2}{3} +2 \binom{n_1}{2}\binom{n_2}{2} \\
 &   +2\binom{n_1}{2}\binom{n_2}{1}+2\binom{n_1}{1}\binom{n_2}{2} \\
 =& \half n_1n_2(n_1^2+n_2^2+n_1n_2 -2n_1-2n_2+1), \\
z_{(n_1,n_2)} \cdot N_{n_1+n_2-1,2}^\circ \big( (n_1,n_2)\big)
=& z_{(n_1,n_2)} \cdot N_{n_1+n_2-2,1}^\circ \big( (n_1,n_2)\big) =0.
\end{split}
\end{equation*}
\end{Example}

\section{Application: Polynomiality of One-Part Double Hurwitz Numbers}
\label{sec-onepart}

In this section
we apply the main theorem to obtain the polynomiality
of the ordinary one-part double Hurwitz numbers
and one-part double Hurwitz numbers with completed cycles.
The conclusions match with some results in literatures \cite{gjv2, ssz}.

\subsection{Polynomiality for ordinary one-part double Hurwitz numbers}

In this subsection we recall the definition of one-part double Hurwitz numbers
and prove the polynomiality.
The polynomiality of double Hurwitz numbers has been proved by Goulden-Jackson-Vakil in \cite{gjv2}.

The ordinary double Hurwitz number labeled by two partitions $\mu^\pm$
is the weighted counting of the branched coverings of $\bC\bP^1$ which has ramification type $\mu^\pm$
over two given points,
and simple ramifications over other branch points.
It is known that the generating series of double Hurwitz numbers
is a tau-function of the $2$d Toda lattice hierarchy \cite{ok,op2}.
See \cite{wy} for an algorithm computing the connected double Hurwitz numbers
using the boson-fermion correspondence.

Fix a positive integer $d$, and denote $\mu^0 = (2,1^{d-2})$.
Then for two partitions $\mu^\pm$ of $d$,
a double Hurwitz number labeled by $\mu^\pm$ is of the form:
\begin{equation*}
H_g^\circ (\mu^0,\cdots,\mu^0,\mu^+,\mu^-),
\end{equation*}
where the number of $\mu^0$ is
\begin{equation*}
2g-2 + l(\mu^+) +l(\mu^-)
\end{equation*}
by the Riemann-Hurwitz formula \eqref{eq-RiemHur}.
If one takes $\mu^+ = (d)$,
then the resulting Hurwitz number $H_{g}^\circ (\mu^0,\cdots,\mu^0,(d),\mu)$ is called a one-part double Hurwitz number.
It is clear that in the one-part case
the connected Hurwitz numbers coincide with the disconnected ones.

Notice that $l(\mu^0) = d-1$.
Now we denote $\mu^-$ by $\mu$,
and then the one-part double Hurwitz number is
a special case of the generalized dessin counting:
\begin{equation*}
H_{g}^\circ (\mu^0,\cdots,\mu^0,(d),\mu) =
N_{|\mu|-1,\cdots,|\mu|-1,1}^\circ \big( (\mu) \big).
\end{equation*}
Then by Theorem \ref{thm-pol-gdessin} one has the following $\bN$-coefficient binomial polynomiality:

\begin{Proposition}
Let $\mu = (\mu_1,\cdots,\mu_l)$ be a partition of integer of a fixed length $l$,
and denote $d=|\mu|$.
Then the one-part double Hurwitz number $z_\mu \cdot H_{g}^\circ (\mu^0,\cdots,\mu^0,(d),\mu)$
(where $\mu^0 = (2,1,\cdots,1)$ and the number of $\mu^0$ is fixed)
is a linear combination of
\begin{equation*}
\binom{\mu_1}{d_1} \binom{\mu_2}{d_2} \cdots \binom{\mu_l}{d_l}
\end{equation*}
with non-negative integer coefficients,
where $d_j\geq 1$ for every $1\leq j\leq l$ and
\be
\sum_{j=1}^l d_j \leq 2\big(2g-1 +l(\mu)\big).
\ee
And thus $z_\mu \cdot H_{g}^\circ (\mu^0,\cdots,\mu^0,(d),\mu)$ is a polynomial
of rational coefficients in $\mu_1,\cdots,\mu_l$
of degree not greater than $4g-2+ 2l(\mu)$.
\end{Proposition}

\begin{Remark}
In \cite{gjv2},
Goulden-Jackson-Vakil proved the piecewise polynomiality of double Hurwitz numbers.
The case of one-part double Hurwitz numbers is more interesting in some sense because
it has some conjectural connections to the intersection numbers
on some compactification of Picard varieties,
see \cite[Conjecture 3.5]{gjv2}.
Taking $\alpha = (d)$ in \cite[Theorem 2.1]{gjv2} tells that
$|\Aut(\mu)|\cdot H_{g}^\circ (\mu^0,\cdots,\mu^0,(d),\mu)$ is
a polynomial in $\mu_1,\cdots,\mu_l$ of degree up to $4g-2+l$.
This matches with the above proposition since $z_\mu = |\Aut(\mu)| \cdot \mu_1\cdots \mu_l$,
and $\mu_1\cdots\mu_l \big| \binom{\mu_1}{d_1} \cdots \binom{\mu_l}{d_l}$
for $d_i\geq 1$.
(Notice here the notations for Hurwitz numbers in this work and \cite{gjv2}
differ by the a factor which is the product of the numbers of automorphisms of the partitions).
\end{Remark}

\subsection{One-part double Hurwitz numbers with completed cycles}

The case of one-part double Hurwitz numbers with completed cycles is similar.
The polynomiality of double Hurwitz numbers with completed cycles
has been proved by Shadrin-Spitz-Zvonkine in \cite{ssz}.
In this subsection we understand the polynomiality of one-part double Hurwitz numbers
with completed cycles
as a consequence of Theorem \ref{thm-binompol-H}.

First we recall the definition of the completed cycles,
see \cite[\S 0.4]{op} and \cite[\S 2]{ssz}.
For a positive integer $k$,
denote by $p_k^{\text{sh}}$ the following shifted symmetric function:
\begin{equation*}
p_k^{\text{sh}} (x_1,x_2,x_3,\cdots) =
\sum_{i=1}^\infty \Big( (x_i - i+\half)^k - (-i+\half)^k \Big).
\end{equation*}
And for a partition $\mu = (\mu_1, \mu_2,\cdots,\mu_l)$,
denote $p_\mu^{\text{sh}} = p_{\mu_1} p_{\mu_2}\cdots p_{\mu_l}$.
Then $\{p_\mu^{\text{sh}}\}_{\mu\in \cP}$ forms a basis of the space of all shifted symmetric functions $\Lambda^*$.
Let $\mu,\lambda$ be two partitions of $d$,
and let $C_\mu\subset S_d$ be the conjugate class of permutations whose cycle type is $\mu$.
Denote:
\begin{equation*}
f_\mu (\lambda) = |C_\mu|\cdot \frac{\chi^\lambda (C_\mu)}{ \dim V^\lambda}
\end{equation*}
where
$V^\lambda$ denotes the irreducible representation of $S_d$ indexed by the partition $\lambda$,
and $\chi^\lambda$ is the irreducible character associated with $V^\lambda$.
And this definition can be extended to the case $|\mu|< |\lambda|$ in the following way:
\begin{equation*}
f_\mu (\lambda) = \binom{|\lambda|}{|\mu|} |C_\mu| \cdot
\frac{\chi^\lambda (C_{\tilde\mu})}{ \dim V^\lambda}
= \binom{m_1(\tilde\mu)}{m_1(\mu)} |C_{\tilde\mu}| \cdot
\frac{\chi^\lambda (C_{\tilde\mu})}{ \dim V^\lambda},
\end{equation*}
where $\tilde\mu = (\mu,1,\cdots,1)$ such that $|\tilde\mu | = |\lambda|$,
and $m_1(\mu)$ is the number of $1$ appearing in $\mu$.
And $f_\mu (\lambda)$ vanishes for $|\mu|> |\lambda|$.
It is proved by Kerov and Olshanski \cite{ko} that
$\{f_\mu\}_{\mu\in \cP}$ forms another basis for the space $\Lambda^*$.

Now let $\bC S_d$ be the group algebra of the symmetric group $S_d$,
and let $Z\bC S_d$ be its center,
i.e., the class algebra.
Then $Z\bC S_d$ has a basis $\{ \widehat C_\lambda \}_{|\lambda| = d}$
where
\begin{equation*}
\widehat C_\lambda = \sum_{\sigma \in C_\lambda} \sigma,
\end{equation*}
and $C_\lambda\subset S_d$ is the conjugate class of permutations whose cycle type is $\lambda$.
Then there exists a linear isomorphism $\phi : \bigoplus_{d=0}^\infty Z\bC S_d \to \Lambda^*$ such that
$\phi(\widehat C_\lambda) = f_\lambda$ for every $\lambda \in \cP$.
For a partition $\mu = (\mu_1,\cdots,\mu_l)$,
the completed $\mu$-conjugate class $\overline{C}_\mu$ is defined by:
\begin{equation*}
\overline C_\mu = \frac{\phi^{-1} (p_\mu)}{\prod_{i=1}^l \mu_i !}.
\end{equation*}

The disconnected double Hurwitz numbers with completed $(r+1)$-cycles
labeled by two partitions $\lambda, \mu$ of $d$ are defined as follows:
\begin{equation*}
H_{g;\lambda,\mu}^{(r)} = \sum_{|\eta|=d}
\frac{\chi^\eta(C_\lambda)}{z_\lambda }
\cdot \Big( \frac{p_{r+1}(\eta)}{(r+1)!} \Big)^s
\cdot \frac{ \chi^\eta (C_\mu)}{ z_\mu},
\end{equation*}
where $z_\mu = d!/|C_\mu|$,
and $s$ is determined by
\begin{equation*}
2g-2 = rs-l(\lambda) - l(\mu).
\end{equation*}
This is actually defined similarly as the ordinary double Hurwitz numbers such that
the simple branches are all replaced by the completed $(r+1)$-conjugate class $\overline{C}_{(r+1)}$,
see the Burnside character formula \eqref{eq-Burnside}.
(Here our notation $H_{g;\mu,\nu}^{(r)}$ differs from the notation $h_{g,\mu,\nu}^{(r)}$ in \cite{ssz}
by the factor $|\Aut(\mu)|\cdot |\Aut(\nu)|$.)

It is known that the completed $(r+1)$-cycle $\overline{C}_{(r+1)}\in  \bigoplus_{d=0}^\infty Z\bC S_d$
is a linear combination of $\widehat C_\nu $
where $|\nu| \leq r+1$,
with non-negative rational coefficients
(see \cite[\S 0.4.4]{op}):
\begin{equation*}
\begin{split}
& 0! \cdot \overline{(1)} = (1),\\
& 1! \cdot \overline{(2)} = (2),\\
& 2! \cdot \overline{(3)} = (3) + (1,1) + \frac{1}{12}\cdot (1),\\
& 3! \cdot \overline{(4)} = (4) + 2\cdot (2,1) + \frac{5}{4}\cdot (2).
\end{split}
\end{equation*}
Thus for two partitions $\lambda,\mu$ with $|\lambda| = |\mu| = d \geq r+1$,
the Hurwitz number with completed cycles $H_{g;\lambda,\mu}^{(r)}$
is a linear combination of
\be
\label{eq-onepart-coeffbinom}
\prod_{i=1}^s \binom{m_1(\tilde \nu^i)}{m_1(\nu^i)}
\cdot  H_g^\bullet (\lambda,\mu,\tilde\nu^1,\cdots,\tilde\nu^s),
\qquad |\nu^i| \leq r+1,
\ee
with non-negative rational coefficients,
where $\tilde \nu^i = (\nu^i,1^{d-|\nu^i|})$.

Now consider the case of the one-part double Hurwitz number with completed cycles
$H_{g;\mu, (d)}^{(r)}$ where $\mu = (\mu_1,\cdots,\mu_l)$ is a partition of $d$
with fixed length $l$.
In the one-part case the connected Hurwitz numbers coincide with the disconnected ones.
We have:
\begin{Proposition}
Let $\mu = (\mu_1,\cdots,\mu_l)$ be a partition of fixed length $l$
with $|\mu| \geq r+1$,
and denote $d= |\mu|$.
Then the one-part double Hurwitz number with completed cycles $z_\mu \cdot H_{g;\mu, (d)}^{(r)}$
is an element in a polynomial in $\mu_1,\cdots,\mu_l$ with rational coefficients.
\end{Proposition}
\begin{proof}
In the one-part case,
the number $z_\mu\cdot H_{g;\mu,(d)}^{(r)}$
is a linear combination of
\begin{equation*}
\prod_{i=1}^s \binom{m_1(\tilde \nu^i)}{m_1(\nu^i)}
\cdot  z_\mu\cdot H_{g;\nu^1,\cdots,\nu^s; 1}^\circ \big( (\mu) \big),
\qquad |\nu^i| \leq r+1,
\end{equation*}
with rational coefficients.
Here for fixed $\nu^1,\cdots,\nu^s$,
Theorem \ref{thm-binompol-H} tells that
$H_{g;\nu^1,\cdots,\nu^s;1}^\circ \big((\mu)\big)$
is a linear combination of the polynomials
\begin{equation*}
\binom{\mu_1}{d_1} \binom{\mu_2}{d_2} \cdots \binom{\mu_l}{d_l},
\end{equation*}
where $d_j\geq 1$ and $\sum_{j=1}^l d_j \leq 2 \sum_{i=1}^s (|\nu^i| - l(\mu^i))$.
Moreover,
the binomial coefficient
\begin{equation*}
\binom{m_1(\tilde \nu^i)}{m_1(\nu^i)} =
\binom{d-|\nu^i| + m_1(\nu^i)}{m_1(\nu^i)}
\end{equation*}
in \eqref{eq-onepart-coeffbinom} is a polynomial in $d$
and thus is a polynomial in $\mu_1,\cdots,\mu_l$ for fixed $\nu^i$.
Therefore the conclusion holds.
\end{proof}

The same argument can be applied to obtain the polynomiality of $z_\mu\cdot H_{g;m;\mu}^{(r)}$,
where $H_{g;m;\mu}^{(r)}$ is the following $m$-part double Hurwitz numbers with completed cycles:
\begin{equation*}
H_{g;m;\mu}^{(r)} =  \sum_{l(\nu) = m} H_{g;\mu,\nu}^{(r)}.
\end{equation*}
In the $m$-part case (for $m\geq 2$) the connected and the disconnected Hurwitz numbers
do not coincide anymore,
but the polynomiality is still valid in both cases,
see Remark \ref{rmk-disconnpol}.

\vspace{.3in}

{\bf Acknowledgements}.
The authors would like to thank Professor Maciej Dołęga for telling us
the relation between our work and the $b$-deformed Hurwitz numbers,
and thank an anonymous referee for pointing out that
the polynomiality of the generalized dessin partition function
can be derived from the topological recursion.
The authors would also like to thank Professor Xiaobo Liu and Professor Jian Zhou for helpful discussions and encouragement.
The second-named author is supported by the NSFC grant (No. 12288201, 12401079),
the China Postdoctoral Science Foundation (No. 2023M743717),
and the China National Postdoctoral Program for Innovative Talents (No. BX20240407).

\begin{appendix}

\section{Proofs of Two Lemmas}

\subsection{Proof of Lemma \ref{lem-eval-schur}}
\label{app-pflem5.1}

Here we give a new proof of Lemma \ref{lem-eval-schur}.
Our strategy is to use the action of $W$-operators $P_m^{(k)}$ on Schur functions proposed by Liu and the second-named author in \cite{ly}.

First we take $m=\pm 1$ and $k=1,2$ in Theorem \ref{thm-LY-action},
and obtain:
\be
\label{eq-egcutandjoin-m=-1}
t_1\cdot s_\mu (\bm t)
= \sum_{ \mu + \square \in \cP}
 s_{\mu+\square}(\bm t), \qquad
L_{-1} s_\mu (\bm t)
= \sum_{ \mu + \square \in \cP}
 c(\square)\cdot s_{\mu+\square}(\bm t),
\ee
and
\be
\label{eq-egcutandjoin-m=1}
\frac{\pd}{\pd t_1} s_\mu (\bm t)
= \sum_{ \mu - \square \in \cP}
 s_{\mu-\square}(\bm t), \qquad
L_1 s_\mu (\bm t)
= \sum_{ \mu - \square \in \cP}
 c(\square)\cdot s_{\mu-\square}(\bm t),
\ee
where
\begin{equation*}
\begin{split}
L_{-1}  =
\sum_{n\geq 1} (n+1) t_{n+1} \frac{\pd}{\pd t_n},\qquad\qquad
L_1 = \sum_{n\geq 1} nt_n \frac{\pd}{\pd t_{n+1}}
\end{split}
\end{equation*}
are the Virasoro operators.
Here we denote by $\mu - \square$ a partition whose Young diagram is obtained from
that of $\mu$ by deleting a box $\square$ on the right of some row,
and set $s_{\mu-\square} = 0$ if $\mu-\square$ is no longer a partition.
The above formulas for $t_n s_\mu$ and $\frac{\pd}{\pd t_n} s_\lambda$ are well-known in literatures,
and formulas for $k=2$ can also be found in \cite{am,ly2}.
Now let $v$ be a formal variable,
and we prove the following:
\begin{Lemma}
We have:
\be
\label{eq-lem-L-1+t1}
e^{s(L_{-1}+v \cdot t_1)} (1) =
\sum_{\mu \in \cP} s^{|\mu|} \cdot s_\mu (t_k = \frac{v}{k}) \cdot s_\mu(\bm t).
\ee
\end{Lemma}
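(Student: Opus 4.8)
The plan is to prove \eqref{eq-lem-L-1+t1} by showing that \emph{both} sides equal the single explicit series $\exp\bigl(v\sum_{n\geq1}s^n t_n\bigr)$, computing each side by a route that is logically independent of the evaluation formula of Lemma \ref{lem-eval-schur} (the ultimate goal of this appendix). I emphasize this independence because the tempting move — iterating the box-adding formulas \eqref{eq-egcutandjoin-m=-1} on $1=s_{(\emptyset)}(\bm t)$ — produces $\sum_\mu s^{|\mu|}\prod_{\square\in\mu}\frac{v+c(\square)}{h(\square)}\,s_\mu(\bm t)$, and identifying this with $\sum_\mu s^{|\mu|}s_\mu(t_k=\tfrac{v}{k})s_\mu(\bm t)$ \emph{is exactly} Lemma \ref{lem-eval-schur}; that route is therefore circular for the present lemma and must be avoided. (It is the \emph{other} computation one later compares against \eqref{eq-lem-L-1+t1} to deduce Lemma \ref{lem-eval-schur}.)

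First I would dispose of the right-hand side. By the classical Cauchy identity $\sum_\mu s_\mu(\bm x)s_\mu(\bm y)=\exp\bigl(\sum_{n\geq1}\tfrac1n p_n(\bm x)p_n(\bm y)\bigr)$, specializing the second alphabet so that every power sum equals $v$ — which is precisely the substitution $t_k=\tfrac{v}{k}$, i.e. $p_k=v$ — and using that $s_\mu(\bm t)$ is homogeneous of degree $|\mu|$ with $\deg t_n=n$ to absorb $s^{|\mu|}$ through $t_n\mapsto s^n t_n$, the surviving power sums on the $\bm t$-side become $n s^n t_n$, so that
\be
\sum_{\mu\in\cP}s^{|\mu|}\,s_\mu(t_k=\tfrac{v}{k})\,s_\mu(\bm t)
=\exp\Bigl(v\sum_{n\geq1}s^n t_n\Bigr).
\ee

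Next I would identify the left-hand side with the same exponential by a differential-equation argument, which is the natural tool since $L_{-1}$ and $t_1$ do not commute and the exponential cannot be split as a product. Set $F(s)=e^{s(L_{-1}+vt_1)}(1)$; it is the unique formal power series in $s$ with $F(0)=1$ solving $\partial_s F=(L_{-1}+vt_1)F$. I would then check that $\Phi(s):=\exp\bigl(v\sum_{n\geq1}s^n t_n\bigr)$ solves the same initial value problem: since $L_{-1}=\sum_{n\geq1}(n+1)t_{n+1}\partial_{t_n}$ is a derivation with $L_{-1}(t_n)=(n+1)t_{n+1}$, one gets $L_{-1}\Phi=\bigl(v\sum_{n\geq1}(n+1)s^n t_{n+1}\bigr)\Phi$; reindexing by $m=n+1$ turns this into $\bigl(v\sum_{m\geq2}m s^{m-1}t_m\bigr)\Phi$, and adding $v t_1\Phi$ (the $m=1$ term) reproduces exactly $\partial_s\Phi=\bigl(v\sum_{m\geq1}m s^{m-1}t_m\bigr)\Phi$. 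As $\Phi(0)=1$, uniqueness of the formal power series solution of a first-order linear ODE forces $F=\Phi$, and comparison with the displayed Cauchy computation yields \eqref{eq-lem-L-1+t1}.

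I expect the difficulty to be bookkeeping rather than substance. The crux is the reindexing step that matches $\partial_s\Phi$ with $(L_{-1}+vt_1)\Phi$ term by term in $t_n$; one must also be disciplined about avoiding the circular box-adding evaluation and about justifying uniqueness of the formal solution (equivalently, that the recursion $F(s)=\sum_n \tfrac{s^n}{n!}(L_{-1}+vt_1)^n(1)$ determines all coefficients from $F(0)=1$). Everything else is routine.
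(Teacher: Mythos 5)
Your proof is correct, and it takes a genuinely different route from the paper's. Both arguments share the same skeleton — identify each side as the unique formal-power-series solution in $s$ of $\pd_s F = (L_{-1}+vt_1)F$, $F(0)=1$ — but they differ in how the right-hand side is shown to satisfy this equation. The paper never produces a closed form: it applies $(L_{-1}+vt_1)$ directly to the Schur expansion $G(\bm t)=\sum_\mu s^{|\mu|}s_\mu(t_k=\frac{v}{k})s_\mu(\bm t)$ using the box-adding formulas \eqref{eq-egcutandjoin-m=-1}, rewrites the resulting sum over $\lambda=\mu+\square$ via the dual box-removing formulas \eqref{eq-egcutandjoin-m=1} as $(L_1+v\pd_{t_1})s_\lambda(\bm t)$ evaluated at $t_k=\frac{v}{k}$, and then invokes Euler's homogeneity formula to get $\pd_s G$; this keeps the entire argument inside the $W$-operator formalism, in the spirit of the appendix's title. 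You instead collapse the right-hand side to the explicit series $\exp\big(v\sum_{n\geq 1}s^n t_n\big)$ via the Cauchy identity (with power sums specialized to $p_n=ns^nt_n$ and $p_n=v$, a legitimate specialization since the $p_n$ freely generate $\Lambda$ over $\bQ$), and then verify the ODE for this exponential by a one-line derivation computation. Your route is shorter and more classical, yields the closed form of the generating series as a byproduct, and — as you correctly note — avoids the circularity of iterating the box-adding formula on $1$, which is exactly what the paper's subsequent derivation of Lemma \ref{lem-eval-schur} does \emph{after} the present lemma is established (the paper's own proof of the lemma is likewise non-circular, since it never invokes the content/hook evaluation). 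What the paper's route buys is self-containedness within the $W$-operator machinery and no reliance on the Cauchy identity, which is the point of giving a ``new proof via $W$-operators'' in the first place.
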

\begin{proof}
Denote by $G(\bm t)$ the right-hand side of \eqref{eq-lem-L-1+t1}.
By \eqref{eq-egcutandjoin-m=-1} we have:
\begin{equation*}
\begin{split}
(L_{-1}+vt_1) G(\bm t)
=& \sum_{\mu\in \cP} s^{|\mu|} \cdot s_\mu (t_k = \frac{v}{k})
\cdot \sum_{\mu+\square \in \cP} \big( c(\square) +v \big) s_{\mu+\square} (\bm t)\\
=&
\sum_{\lambda\in \cP} s^{|\lambda| -1}  \cdot
\sum_{\lambda - \square \in \cP} \big( c(\square) +v \big)
s_{\lambda-\square} (t_k = \frac{v}{k}) \cdot s_{\lambda } (\bm t),
\end{split}
\end{equation*}
where we denote $\lambda = \mu+\square$.
Then by \eqref{eq-egcutandjoin-m=1} we see that:
\begin{equation*}
(L_{-1}+vt_1) G(\bm t) =
 \sum_{\lambda\in \cP} s^{|\lambda| -1}  \cdot
\Big(v \frac{\pd}{\pd t_1} + \sum_{n\geq 1} p_n\cdot (n+1)\frac{\pd}{\pd p_{n+1}} \Big)
s_{\lambda} (\bm t)\big|_{p_k =v}
\cdot s_{\lambda } (\bm t)
\end{equation*}
where $p_k = kt_k$ for every $k$.
Recall that if we set $\deg (t_k) = \deg(p_k) =k$,
then $s_\lambda (\bm t)$ is a weighted homogeneous polynomial of degree $|\mu|$.
Thus by Euler's formula,
\begin{equation*}
\Big(v \frac{\pd}{\pd t_1} + \sum_{n\geq 1} p_n\cdot (n+1)\frac{\pd}{\pd p_{n+1}} \Big)
s_{\lambda} (\bm t)\big|_{p_k =v}
= |\lambda| \cdot s_\lambda (\bm t) \big|_{p_k =v}
= |\lambda| \cdot s_\lambda (t_k = \frac{v}{k}),
\end{equation*}
and then:
\begin{equation*}
\begin{split}
(L_{-1}+vt_1) G(\bm t) = \sum_{\lambda\in \cP} s^{|\lambda|-1} \cdot |\lambda| \cdot
s_\lambda (t_k = \frac{v}{k})\cdot s_\lambda(\bm t)
= \frac{\pd}{\pd s} G(t).
\end{split}
\end{equation*}
Notice that $G(\bm t)|_{s=0} = 1$, and then we conclude that
$G(\bm t) = e^{s(L_{-1}+v \cdot t_1)} (1)$.
\end{proof}

Now we can present a proof of Lemma \ref{lem-eval-schur}.
We expand the exponential in the left-hand side of \eqref{eq-lem-L-1+t1} directly and then apply \eqref{eq-egcutandjoin-m=-1},
and obtain:
\begin{equation*}
\begin{split}
e^{s(L_{-1}+v \cdot t_1)} (1)
= \sum_{n=0}^\infty \frac{s^n}{n!}  (L_{-1}+v \cdot t_1)^n s_{(\emptyset)} (\bm t)
= \sum_{n=0}^\infty \frac{s^n}{n!}
\sum_{|\mu| = n} K_\mu  \prod_{\square \in \mu} \big( c(\square)+v\big)s_\mu(\bm t),
\end{split}
\end{equation*}
where $K_\mu$ is the number of standard Young tableaux of shape $\mu$.
By \eqref{eq-numberofYT} we have:
\be
e^{s(L_{-1}+v \cdot t_1)} (1)
= \sum_{\mu \in \cP} s^{|\mu|}\prod_{\square \in \mu}\frac{ \big( c(\square)+v\big) }{h(\square)}s_\mu(\bm t).
\ee
Now compare this equality with \eqref{eq-lem-L-1+t1},
and we obtain Lemma \ref{lem-eval-schur}.

\subsection{Proof of Lemma \ref{lem-ak-explicit}}
\label{app-pflem5.2}

In this proof, we need to remember the integer $r$ we choose,
and denote $a_k(\bm v)$ for $Z_{(r)}(s; v_1,\cdots,v_r ;\bm p)$ by $a_{(r),k}(\bm v)$.
Recall that $a_{(r),k}(\bm v)$ is determined by:
\be
\label{eq-pf-akr-def}
\sum_{k=1}^{r+1} ka_{(r),k}(\bm v) \cdot \prod_{j=0}^{k-2}(x-j)
= \prod_{i=1}^r (x+ v_i)
= \sum_{j=0}^r x^{r-j} \cdot e_j(\bm v).
\ee
Take $x=0$ in this equality, we obtain:
\be
\label{eq-a1=er}
a_{(r),1}(\bm v) = v_1 v_2\cdots v_r =e_r(\bm v),
\ee
for every $r$.
Then we take $x=1$ and obtain:
\be
a_{(r),2}(\bm v) = \half\Big(
(1+v_1)(1+v_2)\cdots (1+v_r) - a_{(r),1}(\bm v)
\Big)
= \half \sum_{i=0}^{r-1} e_i(\bm v).
\ee
And Similarly, taking $x=2$ gives:
\be
\label{eq-a3=e}
\begin{split}
a_{(r),3}(\bm v) =& \frac{1}{3}\sum_{i=0}^{r-1}(2^{r-i-1}-1)e_i(\bm v).
\end{split}
\ee
Repeat this procedure by taking $x=3,4,5,\cdots$ successively,
and inductively we see that:
\begin{itemize}
\item[1)]
Every $a_{(r),k}(\bm v)$ is a linear combination of $\{e_i(\bm v)\}_{i=0}^r$.
\item[2)]
Fix two integers $j\geq 0$ and $k\geq 1$.
When $r>\max\{j,k\}$,
the coefficient of $e_{r-j}(\bm v)$ in the expansion of $a_{(r),k}$ is independent of $r$.
More precisely,
if $a_{(r),k}(\bm v)$ is of the form:
\begin{equation*}
a_{(r),k}(\bm v) = C_r e_r(\bm v) + C_{r-1} e_{r-1}(\bm v) + \cdots + C_{0} e_{0}(\bm v),
\end{equation*}
then $a_{(r+1),k}(\bm v)$ must be of the form:
\begin{equation*}
a_{(r),k}(\bm v) = C_r e_{r+1}(\bm v) + C_{r-1} e_{r}(\bm v) + \cdots + C_{0} e_{1}(\bm v) +C e_0(\bm v)
\end{equation*}
for some constant $C$.
\end{itemize}
Thus to prove the lemma,
it suffices to show that the coefficient of $e_0(\bm v) = 1$ in $a_{(r),k}(\bm v)$ is:
\begin{equation*}
\frac{1}{k} \sum_{i=1}^{k-1} \frac{(-1)^{i+k-1} \cdot i^r}{i!\cdot (k-1-i)!}.
\end{equation*}
We prove this by induction on $k$.
The cases $k=1,2,3$ can be checked directly using \eqref{eq-a1=er}-\eqref{eq-a3=e}.
Now assume this holds for $k=1,2,\cdots,n$ (where $n\leq r$),
and consider the coefficient of $e_0(\bm v)$ in $a_{(r),n+1}(\bm v)$.
We denote this coefficient by $A$.
Now take $x= n$ in \eqref{eq-pf-akr-def} and compute the coefficients of $e_0(\bm v)$,
and we obtain:
\begin{equation*}
\sum_{k=1}^n
\Big( \sum_{i=1}^{k-1} \frac{(-1)^{i+k-1} \cdot i^r}{i!\cdot (k-1-i)!} \Big)
\Big( \prod_{j=0}^{k-2} (n-j) \Big)
+ (n+1)!\cdot A = n^r.
\end{equation*}
Notice that:
\begin{equation*}
\sum_{k=1}^n
\Big( \sum_{i=1}^{k-1} \frac{(-1)^{i+k-1}  i^r}{i! (k-1-i)!} \Big)
 \prod_{j=0}^{k-2} (n-j)
= \sum_{i=1}^{n-1} \frac{(-1)^{i-1}  i^r \cdot n!}{i!}
\sum_{k=i+1}^{n} \frac{(-1)^{k} }{(k-1-i)!  (n-k+1)!},
\end{equation*}
where
\begin{equation*}
\sum_{k=i+1}^{n} \frac{(-1)^{k} }{(k-1-i)! \cdot (n-k+1)!}
= -\frac{(-1)^{n+1}}{(n-i)!\cdot 0!} = \frac{(-1)^n}{(n-i)!},
\end{equation*}
and therefore we have:
\begin{equation*}
A =  \frac{1}{(n+1)!} \Big(
n^r - \sum_{i=1}^{n-1} \frac{(-1)^{i-1} \cdot i^r \cdot n!}{i!}
\cdot \frac{(-1)^n}{(n-i)!} \Big)
= \frac{1}{n+1}
\sum_{i=1}^n \frac{(-1)^{i+n} \cdot i^r}{i!\cdot (n-i)!},
\end{equation*}
which completes the proof.

\end{appendix}

\end{document}